\def\bb#1\eb{\textcolor{black}
	{#1}} %
\newcommand{\bt}{\begin{theorem}}                     
	\newcommand{\et}{\end{theorem}}                       
\newcommand{\bd}{\begin{definition}}                  
	\newcommand{\ed}{\end{definition}}                    
\newcommand{\bl}{\begin{lemma}}                       
	\newcommand{\el}{\end{lemma}}                                   
\newcommand{\bpr}{\begin{proposition}}                  
	\newcommand{\epr}{\end{proposition}}                    
\newcommand{\bere}{\begin{remark}}                      
	\newcommand{\ere}{\end{remark}}                         
\newcommand{\beq}{\begin{equation}}
	\newcommand{\eeq}{\end{equation}}
\def\bal#1\eal{\begin{align}#1\end{align}}              
\def\baln#1\ealn{\begin{align*}#1\end{align*}}          
\def\bml#1\eml{\begin{multline}#1\end{multline}}        
\def\bmln#1\emln{\begin{multline*}#1\end{multline*}}  
\def\bga#1\ega{\begin{gather}#1\end{gather}}
\def\bgan#1\egan{\begin{gather*}#1\end{gather*}}
\newcommand{\de}{\mathrm{d}}                        
\DeclareMathOperator{\cat}{cat}                     
\newcommand{\N}{\ensuremath{\mathbb{N}}\xspace}     
\newcommand{\R}{\ensuremath{\mathbb{R}}\xspace}     
\newcommand{\eps}{\varepsilon}                      
\newcommand{\inte}{\int_0^1\!\!}
\newcommand{\Vn}{\mathcal{V}}
\newcommand{\D}{\ensuremath{\mathcal{D}}}
\newcommand{\A}{\ensuremath{\mathfrak{A}}\xspace}
\newtheorem*{theorem*}{Theorem}
\newtheorem{theorem}{Theorem}[section]
\newtheorem{corollary}[theorem]{Corollary}
\newtheorem{lemma}[theorem]{Lemma}
\newtheorem{proposition}[theorem]{Proposition}
\theoremstyle{definition}
\newtheorem{definition}[theorem]{Definition}
\theoremstyle{remark}
\newtheorem{remark}[theorem]{Remark}
\theoremstyle{remark}
\theoremstyle{definition}
\title[Multiple connecting geodesics of  a Randers-Kropina metric]%
{Multiple connecting geodesics of a   Randers-Kropina metric via homotopy theory for solutions of an affine control system}
\author[E. Caponio]{Erasmo Caponio}
\address{Dipartimento di Meccanica, Matematica  e Management, Politecnico di Bari,  Bari, Italy}
\email{erasmo.caponio@poliba.it}
\thanks{EC and AM are partially supported by E.C. and A.M are partially supported by European Union - Next Generation EU - PRIN 2022 PNRR {\it ``P2022YFAJH Linear and Nonlinear PDE's: New directions and Application''}, and GNAMPA INdAM – Italian National Institute of High Mathematics. This work was also partially supported by the Italian Ministry of University and Research under the Programme “Department of Excellence” Legge 232/2016 (Grant No. CUP - D93C23000100001). }
\author[M. A. Javaloyes]{Miguel Angel Javaloyes}
\address{Departamento de Matem\'aticas, 	Universidad de Murcia,
	Murcia, Spain}
\email{majava@um.es}
\thanks{ MAJ was supported by  project PGC2018-097046-B-I00 funded by MCIN/ AEI /10.13039/501100011033/ FEDER ``Una manera de hacer Europa''.}
\author[A. Masiello]{Antonio Masiello}
\address{Dipartimento di Meccanica, Matematica  e Management, Politecnico di Bari, Bari, Italy}
\email{antonio.masiello@poliba.it}
\subjclass[2000]{58E10, 53C22, 53C60}
\keywords{Randers metric, Kropina metric, geodesics, affine control systems, causal Killing field, Zermelo's navigation problem}
\dedicatory{Dedicated to the memory of Edward Fadell and Sufian Husseini}
\begin{document}

	\begin{abstract}
		We consider  a geodesic problem in a manifold endowed with a  Randers-Kropina metric. This is a type  of  singular   Finsler metric arising both in the description of the lightlike vectors of a  spacetime endowed with a causal Killing vector field and in the Zermelo's navigation problem with a wind represented by a vector field having norm not greater than one. By using Lusternik-Schnirelman theory, we prove existence of infinitely many geodesics between two given points when the manifold is not contractible.   Due to the type of nonholonomic constraints that the velocity vectors must satisfy, this is achieved thanks to \bb a recent result about the homotopy type of the set of solutions of  an affine control system with a controlled drift and related to   a  corank one, completely nonholonomic distribution of step $2$.  \eb \end{abstract}
	\maketitle
	
	\section{Introduction}\label{intro}
	Randers-Kropina metrics emerge  as  particular cases of  {\em wind Finslerian structures},  a generalization of the notion of a Finsler metric introduced in \cite{CaJaSa14},  that consists in considering on a finite dimensional manifold  a smooth field of  strongly convex compact hypersurfaces  ({\em indicatrices})  which may not  include  pointwise the zero vector. A Randers-Kropina metric corresponds to the case where every indicatrix is an ellipsoid and the zero vector belongs to its bounded region (including the ellipsoid),   but it is not outside. 
	When the indicatrices are ellipsoids  these  geometric  structures can be   interpreted dynamically in two different ways.   
	
	The first one is related to the description of the lightlike vectors in a spacetime endowed with a Killing vector field without any causal character. In the particular case that  the Killing vector field is causal, a Randers-Kropina metric  is the Lagrangian that associates   with  the spatial component of any lightlike vector its time component  (see Section~\ref{causalKilling} for details). 
	
	The second one is related  to the {\em Zermelo's navigation problem} which consists in finding the paths between two points $x_0$ and $x_1$ that minimize the travel time of a ship or an airship moving under  a wind in a Riemannian manifold $(S,g_0)$ (see \cite{Zermel31, Carath67} for the original formulation of the problem in the Euclidean space $\R^2$). If the wind is time-independent then it can be represented  by a vector field $W$ on $S$. In the particular case when  $g_0(W,W)\leq 1$,  the solutions of the problem  are the pregeodesics of a Randers-Kropina metric, associated with the  data $g_0$ and $W$, which are minimizers of its length functional, see \cite[Corollary 6.18]{CaJaSa14}.\footnote{Actually the setting in \cite{CaJaSa14} is more general as a vector field $W$ with no restriction on its  $g_0$-norm is considered} 
	
	Both interpretations enlighten  a characteristic of  these metrics: they can be viewed as  limit singular Lagrangians $F$  of regular strongly convex ones $F_\eps$ (which are Randers metrics) where, as $\eps\to 0$, the strongly convex indicatrices  at $x$, $\{v:F_\eps(x,v)=1\}$,  containing the zero vector inside, deform up to the strongly convex hypersurface $\{v:F(x,v)=1\}$, which might be, at some $\tilde x$,  a punctured hypersurface with the zero vector as a puncture. Thus, the  singularity of these metrics, at those points $\tilde x$,  is related to the fact that they are not well-defined at vectors in the  tangent hyperplanes at zero to the  indicatrix.   These hyperplanes  can be identified   with the kernel at the point $\tilde x$    of a one-form $\omega$ globally defined. Assuming that $\omega$ does not vanish at any point and  taking its kernel at each point, we get   a globally defined smooth distribution,  with constant rank,   that we  assume to be \bb nowhere integrable, i.e $(\omega\wedge\de \omega)_p\neq 0$, for all $p\in S$. \eb A translation of this distribution    defines an affine control system  whose  solutions between   two points $x$ and $y$,  give  a family of  curves having  velocity vectors in the regular set of the Randers-Kropina metric $F$.
	\bb The homotopy type of the space of solutions between two points  of an affine control system   has been studied in \cite{DomRab12} with respect to the $W^{1,1}$-topology and in \cite{BoaLer17} in the $W^{1,p}$-topology, for $p\in [1, p_c)$ where  $p_c\leq 2$. In both papers, whenever the underlying distribution is completely nonholonomic, the space of solutions considered  is proved to be homotopy equivalent to the based loop space (with the compact-open topology).  In \cite{CaMaSu24}  such results have been extended to a subset of the solutions of a differential inclusion defined by a field of open half-spaces (with the union of the zero-section) whose boundary, in each tangent space, is  a corank one, nowhere integrable distribution. Remarkable,  the homotopy type of this set of solutions is obtained in the $W^{1,p}$-topology, for any $p\in [1, +\infty)$  (see Theorem~\ref{lerario}).  A celebrated result of E. Fadell and S. Husseini \cite{FadHus91} implies then the existence  of  compact subsets  in  this  space  of solutions   (with the $W^{1,2}$-topology) of arbitrarily high Lusternik-Schnirelman category, provided that the manifold where the curves assume values is non-contractible. 
	We show  that the energy functional of the Randers-Kropina metric is bounded on these compact subsets (see Proposition~\ref{bounded}) though it is not necessarily continuous in the $W^{1,2}$-topology.  \eb This fact allows us to control,  uniformly with respect to $\eps$,   the critical values of the energy functionals of the approximating regular metric $F_\eps$, obtained by  minimax, and then to get  infinitely many critical values for the energy functional of the Randers-Kropina  metric $F$, see Theorem~\ref{multiplegeos}.
	
	Due to the singularities of the metric $F$, establishing the convergence of critical points  of the energy functionals of $F_\eps$ to a critical point of the energy functional of $F$ is not a straightforward task. We notice that  
	the approximating   Randers metrics $F_\eps$ can be obtained as the Fermat metrics associated with one-dimensional higher  Lorentzian standard stationary spacetimes $(S\times\R, g_\eps)$ by stationary-Randers correspondence, see \cite{CaJaMa11,CaJaSa11}  and the review paper \cite{Masiel21}. The metrics of these  spacetimes are  perturbations of the Lorentzian metric  associated with the Randers-Kropina one, where the vector field $\partial_t$, $t$ the natural coordinate of the factor $\R$, is Killing and timelike. A key point is that, differently from the metrics $F_\eps$ and $F$, the Lorentzian metrics $g_\eps$ approximate $g$ smoothly w.r.t. $\eps\to 0$, providing a good setting for studying convergence of geodesics (see Remark~\ref{smoothdependence} and  Proposition~\ref{energies}),   thanks to a  correspondence existing between future-pointing lightlike pregeodesics of the overlaying Lorentzian metrics and pregeodesics of the underlying Finsler metrics (see Theorems~\ref{fermat}).  Our first convergence result (Proposition~\ref{preconv}) concerns a sequence of future-pointing lightlike geodesics $\{\gamma_n\}$ with bounded arrival times (each $\gamma_n$ is a geodesic of $g_{\eps_n}$, $\eps_n\to 0$) and it is based on Lemma~\ref{convexforall} that is  of independent interest  since we prove the existence of convex neighborhoods for all the elements of a family of  $C^2$  pseudo-Riemannian metrics.  The approximation with standard stationary spacetimes   has been recently used in \cite{BaCaFl17} to prove geodesic connectedness of a globally hyperbolic spacetime endowed with a complete lightlike Killing vector field and in \cite{CaGiMS21} to obtain results about existence and multiplicity of geodesics of a Kropina metric. 
	In \cite{CaGiMS21},  geodesics are obtained as   minimizers of the  length functional on the connected components of the considered paths space assuming that  this contains at least one admissible curve. 
	Using Proposition~\ref{preconv}, the  results  in \cite{CaGiMS21} extend  to the setting of the present paper (see Remark~\ref{minimo} for the existence of a minimizer between two given points).
	Thus, our aim in this work  is   to look for geodesics which are not necessarily minimizers, by using  Lusternik-Schnirelman  theory.

	\section{Randers-Kropina metrics and spacetimes with a causal Killing vector field}\label{causalKilling}
	In this section we introduce  Randers-Kropina metrics  and describe their connection with lightlike vectors in a spacetime admitting a causal Killing vector field.   The more specific case of a timelike Killing vector field was studied in \cite{CaJaMa11, CaJaSa11} while the general case of a Killing field with no causal character was studied in \cite{CaJaSa14}; the reader can  look at Section 4 in \cite{CaJaSa14} for  further  properties about the  case considered in the present paper. 
	
	Let $S$ be 	a connected smooth manifold of finite dimension $m\geq 2$, endowed with a smooth Riemannian metric $g_0$, a smooth one-form $\omega$ and a smooth function $\Lambda$. Let $TS$ be the tangent bundle of $S$ and $\pi:TS\to S$ be the canonical projection.  We consider on $S\times \R$ the bilinear symmetric tensor 
	\beq\label{g}g=g_0+\omega\otimes \de t +\de t\otimes \omega-\Lambda \de t^2,\eeq
	where $t$ is the natural coordinate associated with the factor $\R$. It can be easily seen that $g$ is a Lorentz metric if and only if
	\begin{equation}
		\label{lorentzian} \Lambda(x) +\|\omega\|^2_{x}>0,\quad \text{for all $x\in S$,}
	\end{equation}
	being $\|\omega\|_x$ the $g_0$-norm of $\omega$ in $T_xS$.
	Notice that the vector field $\partial_t$ is a complete Killing vector field for $g$, and $\Lambda(x)=g_{(x,t)}(\partial_t,\partial_t)$, for all $t\in\R$, so that the value of $\Lambda(x)$ defines the causal character of $\partial_t$ at the points $(x,t)$, for all $t\in\R$.  
	
	We recall that 
	the opposite of the gradient of the temporal function  $(x,t)\in S\times\R\mapsto t\in\R$ gives a  time-orientation to $S\times \R$ in the sense that it allows us to choose, continuously and  globally, one of the two causal cones at $T_p(S\times \R)$, $p\in S\times \R$. The selected ones  constitute the set of future-pointing causal vectors in $T(S\times \R)$; with our convention on the signature of the metric $g$,  they are non-zero  vectors $w\in T(S\times \R)$, such that $g(w,w)\leq 0$ and $\de t(w)> 0$ so that $\partial_t$ is future-pointing as well. Thus, a causal vector $w\in T(S\times\R)$ is future-pointing if and  only if $g(w,\partial_t)\leq 0$.  
	
	A future-pointing  lightlike vector $(v,1)\in T_{(x,t)}(S\times \R)$ satisfies, by definition, the equation 
	
	\beq\label{ellipsoid}g_{0}(v,v)+2\omega(v)-\Lambda(x)=0,\eeq
	which, thanks to \eqref{lorentzian}, represents   a non-degenerate  ellipsoid $\Sigma_x$ in $T_x S$, for each $x\in S$. We call the field $x\in S\mapsto \Sigma_x$ a 
	{\em wind Riemannian structure}.  The other way round, it can be proved  then  that any wind Riemannian structure can be associated with a conformal class of spacetimes of the above type (see \cite[Theorem 3.10]{CaJaSa14}). 
	
	\smallskip
	In this work we assume that $\Lambda(x)\geq 0$ and   $\omega_x\neq 0$, for all $x\in S$. 
	\smallskip
	
	As $\Lambda\geq 0$, the Killing vector field $\partial_t$ is causal.
	
	Notice that  a  vector $v\in TS$ satisfies 
	\eqref{ellipsoid} (in other words,  $(v,1)$ is lightlike for $g$) if and only if  
	\[1=\begin{cases}
		R(v):=\dfrac{1}{\Lambda(x)}\left(\omega(v)+\sqrt{\Lambda(x)g_0(v,v)+\omega^2(v)}\right)&\text{if $\Lambda(x)>0$}\\
		K(v):=-\dfrac 1 2\dfrac{g_{0}(v,v)}{\omega(v)}&\text{if $\Lambda(x)=0$,}
	\end{cases}\]
	where $x=\pi(v)$ (see \cite[Proposition~3.12]{CaJaSa14}).
	The functions $R$ and $K$ on the right-hand side above define two  types  of Finsler norms  on $T_x S$. It is well-known that  $R$ is a standard Minkowski  norm of Randers type  defined on the whole tangent space  (see \cite{CaJaMa11}).  On the other hand, $K$  is a Kropina norm, i.e. a  conic Minkowski norm in the language of \cite{JavSan14}, which is positive only on  $\{v\in T_xS: \omega(v)<0\}$ and whose  
	fundamental tensor   
	\[\frac{\partial^2}{\partial t\partial s} \frac{1}{2}K^2(v+tu+sw)_{|t=s=0},\]
	$u,w\in T_{x}S$,
	is positive definite outside the kernel of $\omega$  (see \cite[Corollary 4.12]{JavSan14}).
	
	Following \cite{CaJaSa14}, these two norms can be unified  by considering the function  $F$ defined as:
	\beq
	F(v):=\frac{g_{0}(v,v)}{-\omega(v)+\sqrt{\Lambda g_{0}(v,v)+\omega^2(v)}},\label{randerskropina}\eeq
	whose domain is the subset $\mathcal A$ of $TS$ defined as 
	\[\mathcal A:=\left\{v\in TS:\begin{array}{ll} v\in T_{\pi(v)}S&\text{if $\Lambda(\pi(v))>0$}\\
		\omega(v)<0&\text{if $\Lambda(\pi(v))=0$}\end{array}\right\};\]
	$F$  is then called a {\em Randers-Kropina metric}. 
	
	Notice that for all $x\in S$, with $\Lambda(x)=0$, $0\not\in  \mathcal A_x:=\mathcal A\cap T_xS$ though it is an accumulation point of the set $\{v\in T_x S: F(v)=1\}$   (clearly, $\Sigma_x=\{v\in T_x S: F(v)=1\}\cup\{0\}$). 
	
	\smallskip
	Thus, at $x\in S$ such that $\Lambda(x)=0$, $F$ is  not continuously  extendible  at $0$ neither it is extendible on the vectors in $\mathcal D_x:=\ker \omega_x$.
	\smallskip
	
	Let us denote by $\Omega_{x_0x_1}(\mathcal A)$, the set of   continuous and piecewise smooth, {\em admissible} curves from $x_0$ to $x_1$,  i.e.   
	\[\Omega_{x_0x_1}(\mathcal A):=\{\gamma\colon[0,1]\to S: \gamma(0)=x_0,\ \gamma(1)=x_1,\\ \dot \gamma^-(s), \dot\gamma^+(s)\in \mathcal A,\ \forall s\in(0,1)\}\]
	(here  $\dot\gamma^-(s)$ and $\dot\gamma^+(s)$ denote respectively the left and the right derivative  of $\gamma$ at the point  $s$).
	
	We also denote  $\Omega_{x_0x_1}(TS)$, i.e. the classical space of continuous piecewise smooth
	paths between $x_0$ and $x_1$, by $\Omega_{x_0x_1}$.
	
	A geodesic of $(S,F)$  connecting $x_0$ to $x_1$,  $x_0,x_1\in S$, is by definition a critical point of the energy functional 
	\[
	E(\gamma)= \frac{1}{2}\int_0^1F^2(\dot \gamma)\de s,
	\]
	on $\Omega_{x_0x_1}(\mathcal A)$.
	Notice that, as $\mathcal A$ is an open subset of $TS$, piecewise smooth  variational  vector fields along a curve $\gamma\in \Omega_{x_0x_1}(\mathcal A)$ are well-defined and then it makes sense to define geodesics as critical points of $E$.
	Moreover since the fundamental tensor of $F$ is non-degenerate on $\mathcal A$, it can be proved that the Legendre transform of $F$ is injective (see \cite[Proposition 2.51]{CaJaSa14}) and then  a critical point $\gamma$ of $E$ is actually smooth and  parametrized with $F(\dot\gamma)=\mathrm{const.}$ (see also \cite[Lemma 2.52]{CaJaSa14}).

	Geodesics of a Randers-Kropina space $(S,F)$ are related to future-pointing lightlike geodesics of the  product spacetime $(S\times \R,g)$, $g$ as in \eqref{g}.
	Let us  introduce the bilinear symmetric tensor $h$ on $S$ defined as
	\[h:=\Lambda g_0 +\omega\otimes\omega.\]
	Notice that on the open region $S_0:=\{x\in S:\Lambda(x)>0\}$, $h$ is a Riemannian metric, being degenerate on $S\setminus S_0$. 
	Being $\partial_t$ Killing,  any geodesic  of $(S\times\R, g)$
	must satisfy  $g(\dot\gamma(s),\partial_t)=C_\gamma$, with $C_\gamma\in\R$ constant. 
	For a future-pointing lightlike geodesic $\gamma(s)=\big(\sigma(s),t(s)\big)$ of $(S\times\R, g)$ (i.e.   a geodesic  such that $g\big(\dot\gamma(s),\dot\gamma(s)\big)=0$,  for all $s$, and $C_\gamma\leq 0$) we have   $C_\gamma^2=h(\dot\sigma,\dot\sigma)$. 
	This can be easily checked, taking into account that at instants $s$ such that $\sigma(s)\in S\setminus S_0$, $C_\gamma^2=g\big(\dot\gamma(s),\partial_t\big)^2=\omega^2(\dot\sigma)=h(\dot\sigma,\dot\sigma)$, otherwise $g(\dot\gamma,\dot\gamma)=0$ is equivalent to 
	\[g_0(\dot\sigma,\dot\sigma)+\frac{\omega^2(\dot\sigma)}{\Lambda}-\left(\frac{\omega(\dot\sigma)}{\sqrt{\Lambda}}-\sqrt{\Lambda}\dot t\right)^2=0,\]
	and we get the result taking into account that $C_\gamma=\omega(\dot\sigma)-\Lambda\dot t$.
	The following theorem can be deduced as  a particular case of  \cite[Theorem 5.5]{CaJaSa14} and can be interpreted as a version of the Fermat's principle for a spacetime  with a causal Killing vector field when the spacetime splits globally as $S\times\R$ and the causal Killing vector field coincides with $\partial_t$, $t$ the natural coordinate on the factor $\R$.  
	\begin{theorem}[\cite{CaJaSa14}, Theorem 5.5]\label{fermat}
		Let $\sigma$ be a piecewise smooth admissible curve in $(S,F)$. Then $\sigma$ is a  geodesic of the Randers-Kropina space $(S,F)$ parametrized with $F(\dot\sigma)=1$ 
		if and only if the curve $\gamma$ defined by
		$\gamma(t)=\big(\sigma(t), t)$ is a  future-pointing lightlike pregeodesic of $(S\times \R,g)$ with non-constant component $\sigma$.  
	\end{theorem}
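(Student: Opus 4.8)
The plan is to reduce the theorem to a Fermat-type variational principle for the arrival time, relying on the pointwise identification of future-pointing lightlike directions with the level set $\{F=1\}$. First I would record the pointwise correspondence already implicit in \eqref{ellipsoid} and \eqref{randerskropina}: for $x\in S$ and $v\in\mathcal A_x$, the vector $(v,1)\in T_{(x,t)}(S\times\R)$ is lightlike for $g$ if and only if $g_0(v,v)+2\omega(v)-\Lambda(x)=0$, that is $v\in\Sigma_x\setminus\{0\}$, which by the computation preceding \eqref{randerskropina} is equivalent to $F(v)=1$. Since $\gamma(t)=(\sigma(t),t)$ has $\dot\gamma=(\dot\sigma,1)$, this already shows that $\gamma$ is lightlike exactly when $\sigma$ is admissible with $F(\dot\sigma)\equiv1$; moreover $\gamma$ is then automatically future-pointing, because $\de t(\dot\gamma)=1>0$, consistently with $C_\gamma=g(\dot\gamma,\partial_t)=\omega(\dot\sigma)-\Lambda=-\tfrac12\big(\Lambda+g_0(\dot\sigma,\dot\sigma)\big)\le0$. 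Thus the theorem is reduced to proving, for an admissible $\sigma$ with $F(\dot\sigma)\equiv1$, the equivalence between ``$\sigma$ is an $F$-geodesic'' and ``$\gamma$ is a $g$-pregeodesic''.

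For this equivalence I would use the arrival-time functional. Given any admissible $\sigma\in\Omega_{x_0x_1}(\mathcal A)$, consider its future-pointing lightlike lift $s\mapsto(\sigma(s),\tau(s))$, $\tau(0)=0$, obtained by solving $g_0(\dot\sigma,\dot\sigma)+2\omega(\dot\sigma)\dot\tau-\Lambda\dot\tau^2=0$ for the positive root. A direct check identifies this root with $R(\dot\sigma)$ when $\Lambda>0$ and with $K(\dot\sigma)$ when $\Lambda=0$, hence $\dot\tau=F(\dot\sigma)$ in every admissible case, so that the arrival time is
\[
\tau(1)=\int_0^1 F(\dot\sigma)\,\de s=:L(\sigma),
\]
the $F$-length of $\sigma$. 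I would then establish Fermat's principle in this setting: a future-pointing lightlike curve joining $(x_0,0)$ to the line $\{x_1\}\times\R$ is a $g$-pregeodesic if and only if its spatial projection is a critical point of $L$ on $\Omega_{x_0x_1}(\mathcal A)$. Concretely, computing the first variation of $\tau(1)$ along compactly supported variations through lightlike lifts, and exploiting the Killing conservation $g(\dot\gamma,\partial_t)=C_\gamma$ along $g$-geodesics to relate the $\partial_t$-component of $\dot\gamma$ to the affine parameter, shows that stationarity of the arrival time is equivalent to the lightlike pregeodesic equation $\nabla^g_{\dot\gamma}\dot\gamma\parallel\dot\gamma$.

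To conclude, I would use that for the curve of the statement the lift is parametrized by $\dot\tau\equiv1$, whence $F(\dot\sigma)\equiv1$ is constant; since a critical point of $L$ traversed with constant $F$-speed is a critical point of the energy $E$ (the equality case of $L^2\le2E$), $\sigma$ is a genuine $F$-geodesic parametrized with $F(\dot\sigma)=1$, and conversely such an $F$-geodesic is critical for $L$ and non-constant. The main obstacle I expect is \textbf{the Fermat step on the singular locus} $\{\Lambda=0\}$: there $F$ is only a conic Kropina norm, $h=\Lambda g_0+\omega\otimes\omega$ degenerates, and lightlike lifts may develop vertical segments, so one must guarantee that the admissible variations keep $\dot\sigma$ inside $\mathcal A$ (so that both $F$ and the lift stay defined) and that the first-variation computation remains valid across $\{\Lambda=0\}$. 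The hypothesis that $\sigma$ be non-constant is precisely what rules out the degenerate vertical lightlike pregeodesics supported in this locus, which do not project onto $F$-geodesics.
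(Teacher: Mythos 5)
First, a structural point: the paper contains no proof of this statement — Theorem~\ref{fermat} is imported as a particular case of \cite[Theorem 5.5]{CaJaSa14} — so there is no internal argument to compare yours against, and your proposal has to stand as a self-contained proof of the quoted result. Your preliminary reductions are correct and in the spirit in which the paper reads the theorem (it explicitly calls it ``a version of the Fermat's principle''): the pointwise equivalence between $(v,1)$ being $g$-lightlike and $F(v)=1$ for $v\in\mathcal A_x$; the computation of the positive root $\dot\tau=F(\dot\sigma)$ of the lightlike-lift equation in both the Randers and the Kropina case; the sign identity $g(\dot\gamma,\partial_t)=-\tfrac12\big(\Lambda+g_0(\dot\sigma,\dot\sigma)\big)\le 0$ giving future-pointing; and the passage between constant-speed critical points of the length and critical points of the energy, which is legitimate here because $\mathcal A$ is open and the fundamental tensor is nondegenerate on $\mathcal A$.

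The genuine gap is the one you name yourself and then defer: the ``Fermat step'' is not a technical intermediate — it \emph{is} the theorem, and you assert it with a one-sentence sketch (``computing the first variation \dots shows \dots''). It cannot be quoted from the classical literature precisely in the situation this paper cares about: where $\Lambda(x_1)=0$ the target line $\{x_1\}\times\R$ is lightlike, not timelike, so Perlick-type Fermat principles for timelike observer curves do not apply off the shelf. One must run the constrained first-variation argument by hand, and two points need proof rather than assertion: (i) the strict transversality $g(\dot\gamma,\partial_t)<0$ along lifts of admissible curves (true because admissibility forces $\omega(\dot\sigma)<0$, in particular $\dot\sigma\neq 0$, wherever $\Lambda=0$ — you use this tacitly, e.g.\ to divide by $g(\dot\gamma(1),\partial_t)$, but never verify it); and (ii) for the direction ``stationary arrival time $\Rightarrow$ pregeodesic'', that the variation fields tangent to your restricted class (lightlike lifts of admissible curves, satisfying the linearized constraint $g(\nabla_s V,\dot\gamma)=0$ with $V(0)=0$, $V(1)\parallel\partial_t$) are rich enough to force $\nabla_{\dot\gamma}\dot\gamma\parallel\dot\gamma$; this Lagrange-multiplier-type argument must survive the degeneration of $h=\Lambda g_0+\omega\otimes\omega$ on $\{\Lambda=0\}$. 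Until that analysis is written out, what you have is a correct reduction of the theorem to an equivalent statement, not a proof. (A minor point in your favour: the non-constancy clause needs no extra mechanism in your setting, since an admissible constant curve forces $\Lambda(x_0)>0$, making $\gamma$ timelike rather than lightlike; the clause appears because \cite{CaJaSa14} also admits non-admissible constant geodesics at points of $S\setminus S_0$, cf.\ Remark~\ref{czero}.)
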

	\bere\label{czero}
	Let us  recall that in \cite[Theorem 5.5]{CaJaSa14} the notion of a unit geodesic of a wind Riemannian structure $\Sigma$ includes the possibility that $\sigma$ is not admissible, according to the definition given above, being a constant curve of constant value   $x_0\in S\setminus S_0$, i.e. $\Lambda(x_0)=0$.   In this case  the curve $\gamma(t)=(x_0,t)$ is 
	a lightlike pregeodesic     if and only if  $\de \Lambda_{x_0}(\mathcal D_{x_0})=0$   (see \cite[Lemma 3.21]{CaJaSa14}).
	\ere
	\section{The perturbation with standard stationary spacetimes}
	The function   $\Lambda$ in \eqref{g} is non-negative, thus adding a  positive parameter $\eps$ gives $\Lambda(x)+\eps>0$, for all $x\in S$ and then the metric  $g_\eps$	on $S\times\R$
	\[g_\eps:= g_0+\omega\otimes\de t+\de t\otimes\omega -(\Lambda +\eps)\de t^2,\]
	has larger future-causal cones than $g$. In particular, the vector field $\partial_t$ becomes timelike for $g_\eps$, remaining a Killing vector field; thus, for each $\eps>0$,
	$(S\times\R, g_\eps)$ is  a {\em standard stationary spacetime} (see, e.g.,  \cite{CaJaMa11, CaJaSa11}).
	
	Let us consider the space $\Omega_{p_0p_1}$ of continuous piecewise smooth curves $\gamma(s)=(\sigma(s),t(s))$ in $S\times \R$ parametrized on a given closed bounded interval in $\R$, say $[0,1]$, and connecting $p_0$ to $p_1$, $p_0,p_1\in S\times\R$.   By computing the Euler-Lagrange equation of the energy functional $I_\eps:\Omega_{p_0p_1}\to\R$ of the metric $g_\eps$, 
	\[
	I_\epsilon (\gamma) := \frac 1 2\int_0^1
	\left(g_0(\dot \sigma, \dot \sigma) + 2\omega(\dot \sigma)\dot t - (  \Lambda +\eps) \dot t^2\right)\de s,
	\]
	we see that the  geodesic  equation of $(S\times\R, g_\eps)$ are the following:
	\beq\label{geoeps}
	\begin{cases}
		\omega(\dot \sigma)-(\Lambda+\eps) \dot t=C_{\gamma,\eps}\\
		\nabla_{\dot \sigma}\dot \sigma=\dot t\Omega^\sharp(\dot \sigma)-\omega^\sharp\ddot t-\frac 1 2\nabla\Lambda\dot t^2
	\end{cases}
	\eeq
	where  $\omega^\sharp$ and $\Omega^\sharp$ are  the vector field and the $(1,1)$-tensor field $g_0$-metrically equivalent respectively to $\omega$ and $\Omega=\de \omega$.  
	%while $\nabla\Lambda$ is the gradient of $\Lambda$ w.r.t. the metric $g_0$. 
	Notice that the constant $C_{\gamma,\eps}$ is equal to $g_\eps(\dot\gamma,\partial_t)$ giving the conservation law related to the Killing vector field $\partial_t$ and the metric $g_\eps$.
	
	For $\eps=0$ the above equations reduce to the geodesic equations of $(S\times\R, g)$ and $C_{\gamma,0}=C_\gamma=g(\dot\gamma,\partial_t)=\omega(\dot\sigma)-\Lambda \dot t$.
	
	Theorem~\ref{fermat} holds also in the spacetimes $(S\times\R, g_\eps)$, but for $\eps>0$ the metric $F_\eps$ in \eqref{randerskropina} is a genuine Randers metric on $S$ which is also equal  to    
	\[F_\eps(v)=\frac{1}{\Lambda+\eps}\left(\sqrt{\big(\Lambda+\eps\big) g_{0}(v,v)+\omega^2(v)}+\omega(v)\right),\]
	and  
	\[ 
	h_\eps:=(\Lambda+\eps)g_0+\omega\otimes\omega\]
	is a Riemannian metric on $S$ for all $\eps\in(0,1]$.
	The  family of metrics $F_\eps$  is strictly  decreasing in  the parameter $\eps$ as a computation of its derivative w.r.t.  $\eps$ shows; anyway, let us  give a geometric explanation of this monotonicity.
	\bpr\label{monotonicity}
	For any $\eps_1,\eps_2> 0$, with $\eps_1<\eps_2$, and for all $v\in TS$, it holds $F_{\eps_2}(v)<F_{\eps_1}(v)$. Moreover, the same holds for $\eps_1=0$ provided that $v\in \mathcal A$, i.e. $F_\eps(v)<F(v)$, for all $\eps>0$ and $v\in\mathcal A$.
	\epr 	
	\begin{proof}                                                                                                                                 
		Let $x\in S$; by homogeneity, it is enough to prove the inequality for all vectors $v\in T_xS$ such that $F_{\eps_1}(v)=1$. Let then $v\in T_xS$ be such that $F_{\eps_1}(v)=1$. Hence, the vector $(v,1)$ is future-pointing and lightlike for the metric $g_{\eps_1}$. As $g_{\eps_1}\big((v,1),(v,1)\big)>g_{\eps_2}\big((v,1),(v,1)\big)$, we have that $(v,1)$ is future-pointing and timelike for $g_{\eps_2}$. Since  future-pointing timelike vectors $(v,\tau)$ that project on the same vectors $v\in T_x S$, $v\neq 0$, have components $\tau$ which are strictly greater than the component of the lightlike vector $\big( v, F_{\eps_2}(v)\big)$, we get $1>F_{\eps_2}(v)$, as we wanted to show. The same reasoning applies when $\eps_1=0$, provided that $v\in \mathcal A$, giving the inequality for $F_0\equiv F$ and $F_\eps$, $\eps>0$.
	\end{proof}
	Our aim is to obtain a multiplicity result for geodesics of the Randers-Kropina metric $F$,  connecting two given points, as limits of geodesics of the approximating Randers metric $F_\eps$.
	Hence, we need to study   convergence of these geodesics as $\eps\to 0$. Due to the singular nature of the Randers-Kropina metric $F$, it is convenient, thanks to Theorem~\ref{fermat}, to look at convergence of future-pointing lightlike pregeodesics  of $(S\times\R,g_\eps)$ to a pregeodesic  of $(S\times\R, g)$. 
	
	\bere\label{smoothdependence}
	Notice that the  geodesic  equation  \eqref{geoeps}, when put in normal form, are not continuous for  $\eps=0$ and $x\in S\setminus S_0$. This is an apparent singularity that disappears if we write the   geodesic  equation using the  Levi-Civita connection of the metric $g_\eps$. 
	In fact by taking, at any point $(x_0,t_0)\in S\times \R$,  a basis of $T_{(x_0,t_0)}(S\times\R)$ of the type $\{\partial_t, e_1,\ldots, e_m\}$, where $\{e_1,\ldots, e_m\}$ is a system of orthonormal vectors, w.r.t. $g_0$, of $T_{x_0}S$, one can easily  see that  $\det(g_\eps)_{(x_0,t_0)}=-(\Lambda(x_0)+\eps+\|\omega^\sharp\|^2_{x_0})$, which is not zero for all $\eps\in[0,1]$ by \eqref{lorentzian}. The computation of the Christoffel symbols of the Levi-Civita connections of the metrics $g_\eps$, $\eps\in[0,1]$ (recall that $g_0=g$),  can be performed using this basis and the expected smooth dependence on $\eps\in[0,1]$  becomes evident (see, e.g. \cite[Theorem 1]{FloSan02}; even though $\partial_t$ in this reference is timelike, the computations extend formally to the case when $\partial_t$ is somewhere lightlike).
	
	The smooth dependence of the  geodesic   equations of the metrics $g_\eps$, $\eps\in[0,1]$, on  the parameter $\eps$ would allow us to get  the existence of a geodesic of $g$ as a $C^1$-limit of a sequence of geodesics of the approximating metrics provided that their initial vectors are bounded.  In particular this applies to future-pointing lightlike geodesics.  Anyway, from Theorem~\ref{fermat}, geodesics of the approximating Randers metric $F_\eps$ lift to lightlike pregeodesics of $g_\eps$ and the reparametrizations needed to pass to  future-pointing lightlike geodesics   can produce unbounded initial vectors if $\Lambda$ vanishes at the initial point. Thus, we look for a convergence result that involves directly  pregeodesics parametrized with the temporal function $t$. This can be achieved thanks to Lemma~\ref{convexforall} plus  some basic causality properties. Our main   references for causality of spacetimes are \cite{BeEhEa96, O'neill}. 
	\ere
	We begin by giving a result about existence of a convex neighborhood for  a family of  $C^2$   pseudo-Riemannian metrics $\{g_\eps\}$, $\eps\in (-a,a)\subset\R$,  $C^2$-depending  on $\varepsilon$.  Even though this result seems natural and not surprising, we have not been able to find a reference about it. Our proof is inspired by the one about the existence of convex neighborhoods for a single metric in \cite[Proposition 5.7]{O'neill}.  
	 
	 \bere \label{family} It is convenient to see the family $\{g_\eps\}_{\eps\in (-a,a)}$ as a section on the pull-back vector bundle $\pi_2^*T^*M\otimes \pi^*_2T^*M$ over the manifold $(-a,a)\times M$ through the projection map $\pi_2:(-a,a)\times M\to M$, $\pi_2(\eps,p)=p$. By saying that the family $\{g_\eps\}_{\eps\in (-a,a)}$ is $C^2$ we mean that this section is $C^2$. 
	 \ere

	\bl\label{convexforall}
	Let $\{g_{\varepsilon}\}_{  \varepsilon\in (-a,a) }$  be a  $C^2$ family (in the sense of Remark~\ref{family})  pseudo-Riemannian metrics on a manifold $M$.    For every  point $p_0\in M$ and every $\varepsilon_0\in(0,a)$ there exists  a neighborhood of $p_0$ in $M$  which is simultaneously convex for all  metrics $g_\varepsilon$,  $\varepsilon\in [-\varepsilon_0,\varepsilon_0]$. 
	\el
	\begin{proof}
		Let $m$ be the dimension of $M$, $\pi:TM\to M$ be the canonical projection and for any $p\in M$ let $\exp^\varepsilon_p$ be  the exponential map of $g_{\varepsilon}$ at $p\in M$.   Let $\bar \eps\in [-\eps_0,\eps_0]$.  By standard results about smooth dependence of solutions of differential equations on  initial conditions and parameters, taking into account the homogeneity properties of the  geodesic   equations, we know that there exists a neighborhood  $ \mathcal O_{\bar\eps} \subset TM$  of $(p_0,0)$ and   $\delta_{\bar\eps}>0$ such that  the map $\varphi:\mathcal O_{\bar\eps}\times (\bar \eps-\delta_{\bar\eps}, \bar\eps+\delta_{\bar\eps})\rightarrow M \times M \times (\bar \eps-\delta_{\bar\eps}, \bar\eps+\delta_{\bar\eps})$,  
		\[\varphi(v,\varepsilon)=(\pi(v),exp^\varepsilon_{  \pi(v) }(v),\varepsilon)\]
		is well-defined   and  $C^1$.    Since  $d\varphi_{(p_0,0, \bar\eps )}$ is  the identity map,   by the inverse function theorem, we get  that  there exists an open neighborhood $ U_{\bar\eps} \subset M$ of $p_0$ and $ \beta_{\bar\eps}\in(0,\delta_{\bar\eps}) $ such that, if $ \mathcal  U_{\bar\eps}=: \varphi^{-1}\big (U_{\bar\eps}\times U_{\bar\eps}\times  (\bar \eps-\beta_{\bar\eps}, \bar\eps+\beta_{\bar\eps}) \big)$,  then $\varphi|_{ \mathcal U_{\bar\eps} }$ is a   $C^1$-diffeomorphism. Being the interval $[-\eps_0,\eps_0]$ compact, we obtain a neighborhood $U$ of $p_0$ as intersection of a finite number of neighborhoods of the type  $U_{\bar\eps}$ above  and $\mathcal  U=: \varphi^{-1}\big ( U\times U\times  [-\varepsilon_0,\varepsilon_0] \big)$, such that  the map $\varphi|_{\mathcal U}$ still is a $C^1$-diffeomorphism.   If we want to prove that it is possible to choose a neighborhood of $p_0$, $U'\subset U$, convex for all $g_\varepsilon$ with $\varepsilon\in  [-\varepsilon_0,\varepsilon_0] $, we need to prove that given two points $q_0,q_1\in U'$ and $v=\varphi^{-1}(q_0,q_1,\varepsilon)\in T_{q_0}M$, the $g_\varepsilon$-geodesic $\sigma^\varepsilon_v:[0,1]\rightarrow M$ is contained in $U'$.  
		By choosing $U$ smaller if necessary, we can assume that it is contained in the domain $V$ of a chart $\psi=(x^1,\ldots,x^m)$ around $p_0$ with $x^1(p_0)=\ldots=x^m(p_0)=0$.

		Define the function $N:V\rightarrow \R$, given by 
		\beq\label{defN} N(p)=x^1(p)^2+\ldots+x^m(p)^2\eeq 
		and the $(0,2)$ tensor $B^\eps$ on $V$ with coordinates 
		\beq\label{B}B^\eps_{ij}(p)=\delta_{ij}-\sum_{k=1}^mx^k(p)\Gamma_{ij}^k(p,\varepsilon),\eeq
		being $\Gamma_{ij}^k(p,\varepsilon)$ the Christoffel symbols of $g_\varepsilon$. Let us define
		\[\Vn(\delta)=\{p\in V: N(p)<\delta\}.\] 
		As $B^\eps$ is positive definite at $p_0$, by taking $\delta>0$ small enough (recall that $\Gamma^k_{ij}(p,\varepsilon)$  are $C^1$ ), we can assume that $\overline{\Vn(\delta)}$ is contained in $U$ and $B^\eps$ is positive definite in $\overline{\Vn(\delta)}$, for all $\varepsilon\in[-\varepsilon_0,\varepsilon_0]$. Let us show that we can choose $U'=\Vn(\delta)$ as a convex neighborhood of $p_0$ for all $g_\varepsilon$. Assume on the contrary that $\sigma^\varepsilon_v$ defined as above from $q_0$ to $q_1$, both in $\Vn(\delta)$, is not completely contained in $\Vn(\delta)$. Let us show that in this case, there has to be,    for each $\eps\in [-\eps_0,\eps_0] $,   a geodesic contained in $\overline{\Vn(\delta)}$ and tangent to the boundary of $\Vn(\delta)$   at one of its inner points.   Indeed, as $\Vn(\delta)$ is connected, there exists a smooth curve $\alpha:[0,1]\rightarrow \Vn(\delta)$ from $q_1$ to $q_0$. Define,    for each $\eps\in [-\eps_0,\eps_0] $,    $\beta^\eps=(\exp^\varepsilon_{q_0})^{-1}\circ\alpha$ (recall that $\exp^\varepsilon_{q_0}$ is invertible on $U$ which contains $\Vn(\delta)$). The curve $\beta^\eps$ provides a  $C^1$   variation $\{\gamma^\eps_t\}_{t\in[0,1]}$ of $g_\varepsilon$-geodesics departing from $q_0$ with $\dot\gamma^\eps_t(0)=\beta^\eps(t)$. For $t=1$, we obtain a constant curve, and then for $t$ close to $1$, these geodesics are contained in $\Vn(\delta)$. Let $ t_\eps \in(0,1)$ be such that $\gamma^\eps_{ t_\eps }$ is the first geodesic completely contained in $\overline{\Vn(\delta)}$   (recall that all the geodesics $\gamma_t^\eps$ must return in $\Vn(\delta)$ as their final point is on $\alpha$ which is contained in $\Vn(\delta)$).   By the  differentiability   of the variation, this geodesic must be tangent to the boundary as required. Let us show that this gives a contradiction. Let $s_0\in(0,1)$ be the instant where $\gamma^\eps_{ t_\eps }$ touches the boundary of $\Vn(\delta)$. Then the function $s\in[0,1]\mapsto N(\gamma^\eps_{ t_\eps }(s))$  (recall \eqref{defN}) has a maximum at $s_0$. Moreover,
		\[\frac{d^2}{ds^2}(N\circ \gamma^\eps_{ t_\eps })=2\sum_{i=1}^m(\left(\frac{d\gamma^{\eps\, i}_{ t_\eps }}{ds}\right)^2+\gamma^{\eps\, i}_{ t_\eps }\frac{d^2\gamma^{\eps\, i}_{ t_\eps }}{ds^2})\]
		denoting $\gamma^{\eps\, i}_{ t_\eps }=x^i\circ \gamma^{\eps}_{ t_\eps }$, and using the  geodesic  equation 
		\begin{align*}
			\frac{d^2}{ds^2}(N\circ \gamma^\eps_{ t_\eps })=&2\sum_{i=1}^m(\left(\frac{d\gamma^{\eps\, i}_{ t_\eps }}{ds}\right)^2
			-\sum_{j,k=1}^m \gamma^{\eps\,  i}_{ t_\eps }\Gamma_{jk}^i(\gamma^\eps_{ t_\eps }(s),\eps)\frac{d\gamma^{\eps\, j}_{ t_\eps }}{ds}\frac{d\gamma^{\eps\, k}_{ t_\eps }}{ds}).
		\end{align*}
		Therefore,
		\begin{align*}
			\frac{d^2}{ds^2}(N\circ \gamma^\eps_{ t_\eps })(s_0)=& 2B^\eps(\dot\gamma^\eps_{ t_\eps }(  s_0 ),\dot\gamma^\eps_{ t_\eps }(s_0))>0,
		\end{align*}
		which is a contradiction with the fact that $N\circ \gamma^\eps_{ t_\eps }$ must have a maximum at $s_0\in (0,1)$.  This concludes that $\Vn(\delta)$ is a normal neighborhood of all its points and then a convex neighborhood for all $g_\eps$,  $\varepsilon\in [-\varepsilon_0,\varepsilon_0] $.
	\end{proof}
	\begin{remark}
		  Let us observe that the proof of Lemma \ref{convexforall} works in the more general case of a  $C^1$  family of homogeneous anisotropic connections \cite{Jav19,Jav20} word for word. In coordinates, an anisotropic connection is equivalent to have Christoffel symbols which depend on oriented directions, namely, they are positive homogeneous functions of degree zero on the slit tangent bundle of the chart. In this case, the autoparallel curves of the anisotropic connection satisfy the same formal equations of geodesics and solutions are preserved by affine reparametrizations.  More generally,  a slight modification of the proof of Lemma~\ref{convexforall} gives also  a common convex neighborhood for  a  $C^1$  family of sprays. Let us recall that a {\em spray} on a smooth manifold $M$ (see, e.g., \cite{CrMeSa12, Jav19}) is  a vector field $S$ on $TM$ which, in the domain  $U\times\R^m$  of any natural chart of $TM$, is given by 
		  \[
		  	S(v)=y^i \frac{\partial}{\partial x^i}- 2G^i(v)\frac{\partial}{\partial y^i},
		  \]
		  where the coordinates of $v\in TM$ are $(x^1,\ldots,x^m, y^1,\ldots,y^m)$, and $G^i:TU\rightarrow \R$ are positively homogeneous of degree $2$ in $v$, i.e. $G^i(\lambda v)=\lambda^2 G^i(v)$.  The  autoparallel curves of a spray are the projections on $M$ of the integral curves of $S$ in $TM$.  In a system of coordinates  $\big (U, (x^1,\ldots,x^m)\big)$ of $M$,  a curve $\gamma:[a,b]\rightarrow U\subset M$ is then  an autoparallel curve of the spray $S$ if and only if
		  \beq\label{spray}
		  	\ddot \gamma^i=- 2G^i(\dot \gamma).
		  \eeq
		  It is well-known that   a  spray carries with it  a well-defined exponential map (this follows from \eqref{spray} and the homogeneity property of $G^i$). We notice here that the minimal requirement to repeat the reasoning in the proof of Lemma~\ref{convexforall} is that a family  of sprays, depending on a parameter $\eps\in (-a,a)\subset \R$,  has coefficients $G^i(v,\eps)$ which are $C^1$ functions on $U\times \R^m\times(-a,a)$. Although  in this case  the  correspondence between solutions of \eqref{spray} and autoparallel curves of a uniquely determined Berwald connection (see \cite[\S 4.7]{BucMir07}, \cite[\S 4.3]{Jav19})  does not make sense (due to the lack of the second partial derivatives w.r.t. the components of $v$ of the coefficients $G^i$),  we can replace, for each $\eps\in(-a,a)$, the bilinear tensor $B^\eps$ in \eqref{B} by the function
$\mathcal B^\eps: V\times \R^m\to \R$ defined as 
\[\mathcal B^\eps(x,v)= |v|^2 -\sum_{i=1}^{m} x^i G^i(v,\eps),\]
(compare also with \cite[Appendix B]{CrMeSa12}, where the existence of a convex neighborhood for a single spray is proved). 
The second derivative of  $N\circ \gamma^\eps_{t_\eps}$ is then  equal to 
$2\mathcal B^\eps\big (\gamma^\eps_{t_\eps}(s),\dot\gamma^\eps_{t_\eps}(s)\big)$ which is positive in a small neighborhood of $x=0$, uniformly w.r.t $v\in \R^m\setminus\{0\}$ and $\eps\in [-\eps_0,\eps_0]$, since by the homogeneity of $G^i$ we have 
\[\mathcal B^\eps(x,v)= |v|^2 \big (1 -\sum_{i=1}^{m} x^i G^i(v/|v|, \eps) \big).\]
 In particular, we  obtain a common convex neighborhood for   families of pseudo-Finsler metrics defined on $TM$ which are at least $C^3$ on $TM\setminus 0$ (see e.g. \cite[\S 4.1]{Jav19} for the definition of a pseudo-Finsler metric) because their geodesics can be obtained as autoparallel curves of  $C^1$ sprays (see e.g. \cite[Theorem 5.4.1]{BucMir07}).   Finally, Lemma \ref{convexforall} can be easily generalized  to    families depending on a finite number of parameters.  
	\end{remark}
		Our aim is now to state a convergence result for a sequence $\{\gamma_n\}$ such that,  for each $n\in\N$,  $\gamma_n$ is a  future-pointing lightlike geodesics of $g_{\eps_n}$, with $\eps_n\to0$.   
	
	Let us first recall  some basic causality notions. Let $(M,g)$ be any  spacetime, i.e. a connected, time-oriented Lorentzian manifold; for any point $p\in M$, the {\em chronological future of $p$}, 
	$I^+(p)$ (resp. the {\em causal future of $p$}, $J^+(p)$) is  the set of points in $M$ that can be reached from $p$ by a future-pointing timelike (resp. causal) curve. We recall that $I^+(p)$ is an open set and $p\in J^+(p)$. We recall that a continuous curve $\gamma:I\to M$, $I$  an interval in $\R$,  is said future-pointing and causal if  for each $s_0\in I$ there exists  a convex neighborhood $U$ of $\gamma(s_0)$ and $\epsilon>0$ such that $\gamma\big((s_0-\epsilon, s_0+\epsilon)\big)\subset U$ and for all $s_1, s_2\in (s_0-\epsilon, s_0+\epsilon)$ with $s_1<s_2$, there exists a smooth future-pointing causal curve in $U$ from $\gamma(s_1)$ to  $\gamma(s_2)$ (see \cite[p.54]{BeEhEa96}).
	For $p\in S\times\R$, let us denote by $I^+_\eps(p)$ and $J^+_\eps(p)$ the chronological and the causal future of $p$ in $(S\times\R, g_\eps)$.

	Moreover, let us denote by $E_\eps$ the energy functional of the Randers metric $F_\eps$, i.e. for a given  couple of points $x_0,x_1\in S$,  $E_\eps:\Omega_{x_0x_1}\to \R$, $E_\eps(\sigma)=\frac1 2\inte F_\eps^2(\dot\sigma)\de s$,  moreover, 
	for each $\eps>0$, let $\ell_\eps:\Omega_{x_0x_1}(S)\to [0,+\infty)$ be the length functional of the Randers metric $F_\eps$, i.e. $\ell_\eps(\sigma)=\int_{\sigma}F_\eps$.

	Let us finally state  a fundamental   assumption to get the convergence result and our main  result, Theorem~\ref{multiplegeos}.  To this end, let us introduce also some basic elements of  Finsler  geometry. For any couple of points $x_0,\ x_1\in S$ we can define 
	$d_\eps(x_0,x_1):=\inf_{\sigma\in \Omega_{x_0x_1}}\ell_\eps(\sigma)$, which is a non-symmetric distance on $S$.  Then for any $x_0\in S$ and $r>0$, let $B_\eps^+(x_0,r):=\{x\in S:d_\eps(x_0, x)<r\}$ and  $B_\eps^-(x_0,r):=\{x\in S:d_\eps(x, x_0)<r\}$ be, respectively, the forward and the backward ball centred  at  $x_0$, having radius $r$ (see, e.g. \cite{BaChSh00}). We denote their closures by $\bar B_\eps^{\pm}(x_0,r)$.
	
	Let us consider the following condition:
	\begin{align}\begin{split}&\text{there exists $\bar \eps\in (0,1]$ such that  the intersections of the}\nonumber \\ 
			&\text{closed forward and the backward balls of $F_{\bar\eps}$ are compact.}\end{split}\tag{$\star$}\label{star}\end{align}
	\bere\label{aboutstar}
	Condition \eqref{star} is equivalent to global hyperbolicity of $(S\times\R,g_{\bar \eps})$ by \cite[Proposition 2.2 and Theorem 4.3]{CaJaSa11}. Moreover, it  implies that the energy functional of $F_{\bar\eps}$ (and thus of each $F_\eps$ with $0<\eps<\bar\eps$) satisfies the Palais-Smale condition on   $W^{1,2}_{x_0x_1}(S)$   
	(see the comments above \cite[Theorem 5.2]{CaJaSa11}). 
	\ere 
	\bere Let $(S, g_0)$ be a complete Riemannian manifold and let $d_0$ be the distance associated with $g_0$.
	By \cite[Proposition 3.1 and Corollary 3.4]{Sanche97a} we know that  if there exist a point $\bar x$ and a positive constant $C$  such that  $\|\omega\|_x\leq C(d_0(x,\bar x)+1)$, for all $x\in S$, and there exists a constant $L\geq 0 $ such that $0\leq \Lambda(x)\leq L$ for all $x\in S$ then the standard stationary spacetimes $(S\times\R, g_\eps)$ are globally hyperbolic, for each $\eps>0$ and therefore condition \eqref{star} holds.
	\ere
	 We point out that the convergence result below can be obtained using Lemma~\ref{convexforall} and \cite[Lemma 5.7]{CaJaSa14} about the existence of a limit curve of a sequence of future-pointing, future inextendible, causal curves in a spacetime endowed with a global temporal function. Anyway, we give here a direct proof for the sake of reader convenience.  
	\bpr\label{preconv}
	Assume that condition \eqref{star} holds   and let $\gamma_n:[t_0, t_0+  \Delta_n]\rightarrow S\times\R$ be a sequence of curves of $S\times\R$ parametrized by the time-coordinate $t$, $\gamma_n(t)=\big(\sigma_n(t),t\big)$,  in such a way that $\gamma_n(t_0)=(x_0,t_0)$ and $\gamma_n(t_0+\Delta_n)=(x_1,t_0+\Delta_n)$ for all $n\in\N$, with $x_0,x_1\in S$ and $t_0\in\R$ and such that $\gamma_n$ is a lightlike pregeodesic of $g_{\varepsilon_n}$,  $\Delta_n\to \Delta\in\R$ and $\eps_n\to 0$.   Then there exists a subsequence of $\{\gamma_n\}$ that uniformly converges to a future-pointing lightlike  pregeodesic  of the metric $g$  from $(x_0,t_0)$ to $(x_1,t_0+ \Delta)$.  
	\epr
	\begin{proof}
		From Theorem~\ref{fermat} applied to each spacetime $(S\times\R, g_{\eps_n})$ and each Fermat metric $F_{\eps_n}$ (recall that, for all $\eps>0$,  $F_\eps$ is a Randers metric and then it is a Randers-Kropina metric too) and  Proposition~\ref{monotonicity} we have, for all $n$ big enough  and all   $t\in [t_0,t_0+\Delta_n]$, 
		\bmln d_{\bar\eps}\big(x_0, \sigma_n(t))\big)\leq \int_{ t_0}^tF_{\bar\eps}\big(\dot\sigma_{n}(r)\big)\de r< \int_{ t_0}^tF_{\eps_n}\big(\dot\sigma_{n}(r)\big)\de r\\\leq \int_{t_0}^{ t_0+\Delta_n}F_{\eps_n}\big(\dot\sigma_{n}(r)\big)\de r= \Delta_n\emln
		and analogously $d_{\bar\eps}\big(\sigma_{n}(t), x_1\big)< \Delta_n$. Hence, the supports of the curves $\gamma_n$ are  contained in $\big(\bar B_{\bar\eps}^+(x_0, \Delta+1)\cap \bar B^-_{\bar\eps}(x_1,\Delta +1)\big)\times[t_0,t_0 +\Delta+1]$, for all $n$ big enough,  which is compact in $S\times\R$ by condition \eqref{star}. Therefore, there exists a positive constant $K$ depending on this compact set  such that, 
		\[ \sqrt{g_0\big(\dot\sigma_n(t),\dot\sigma_n(t)}\big)\leq K F_{\bar\eps}\big(\dot\sigma_n(t)\big).\]
		For each $n$, we extend  $\gamma_n$ to a future inextendible causal curves of $g_{\varepsilon_n}$ with the vertical lines from $(x_1, t_0+\Delta_n)$.
		Let us denote with $d_0$ the distance associated  with  $g_0$. For all $\eps\in (0,\bar\eps]$ and for all $t_0\leq t_1\leq t_2\leq  t_0+\Delta+1$ we get, taking into account that if $t>t_0 +\Delta_n$ the component on $S$ of the extended curve is constant,
		\[ d_0\big(\sigma_n(t_1), \sigma_n(t_2))\big)\leq K\int_{t_1}^{t_2}F_{\bar\eps}\big(\dot\sigma_{n}(r)\big)\de r\\ \leq  K \int_{t_1}^{t_2}F_{\eps_n}\big(\dot\sigma_{n}(r)\big)\de r\leq  K (t_2-t_1),\]
		for all $n$ big enough,  where we have used Prop. \ref{monotonicity}. 
		Hence, for any  $t_1,\ t_2\in [t_0,t_0+\Delta+1]$ we have 
		\[d_0\big(\sigma_n(t_1), \sigma_n(t_2)\big)\leq K  |t_2-t_1|,\] 
		for all $n$ big enough. Therefore, the sequence $\{\sigma_n\}$ is equi-Lipschitz and equi-bounded and by Ascoli-Arzel\`a theorem, there exists a subsequence, still denoted by $\{\sigma_n\}$ which uniformly converges on $[t_0,t_0+\Delta+1]$ to a Lipschitz curve $\sigma$. Notice that being $\sigma_n(t_0+\Delta_n)=x_1$, and $\Delta_n\to \Delta$, we also have
		\[d_0(\sigma(t_0+\Delta), x_1)\leq d_0\big (\sigma(t_0+\Delta),\sigma(t_0+\Delta_n)\big)+d_0\big (\sigma(t_0+\Delta_n),\sigma_n(t_0+\Delta_n)\big)\to 0,\] 
		hence $\sigma(t_0+\Delta)=x_1$. Then  $\gamma_n$ uniformly converges (w.r.t. to the metric induced by $g_0\oplus\de t^2$ on $S\times\R$) to the  curve $\gamma(t)=(\sigma(t), t)$ and $\gamma( t_0+\Delta)=(x_1, t_0+\Delta)$. Notice that being $\gamma$  the uniform limit of curves  which are causal and future-pointing in $(S\times\R,g_{\eps_n})$, for each $\eps_n\in  (0,\bar\eps) $, then it is causal and future-pointing in $(S\times\R,g_{\eps_n})$ for each $\eps_n\in  (0,\bar\eps) $. In fact, it is enough to take, for each point $\gamma(t)$, a convex neighborhood of all the metrics $g_{\eps_n}$ (recall Lemma~\ref{convexforall})  and then the claimed properties of $\gamma$ follow as in the proof of \cite[Lemma~3.29]{BeEhEa96}. Observe that since $\gamma$ is causal and future-pointing for all the $g_{\varepsilon_n}$, $n$ big enough,  its  velocity  is contained in the future causal cones of $g_{\varepsilon_n}$ for almost all the instants and all $n$ big enough and then $\gamma$ is also causal and future-pointing for $g$ (see \cite[Theorem A.1]{CaFlSa2008}).
		Let us show that $\gamma$ is the required lightlike pregeodesic of $g$. By contradiction, let us assume that there exists $t_0\in [t_0,t_0+\Delta)$ and an interval $I$ containing $t_0$ and which is strictly contained in  $[t_0,t_0+\Delta)$, such that $\gamma|_I$ is not a lightlike pregeodesic. By Lemma~\ref{convexforall}, we can consider    a convex neighborhood $U$ of $\gamma(t_0)$ for all the metrics $g_{\eps_n}$ and,  without loosing generality, we can assume that there is some $\tau>0$, such that   $[t_0-\tau,t_0+\tau]= I$  and $\gamma(t_0\pm\tau)\in U$ (if $t_0=0$ it is enough to consider the interval $[0,\tau]$, i.e. the role of $t_0-\tau$  is played by $0$). Since we are saying that $\gamma|_I$ is not a lightlike pregeodesic, being a causal curve there must exist a timelike geodesic of the metric $g$ between $\gamma(t_0-\tau)$ and $\gamma(t_0+\tau)$ and contained in $U$. As $\gamma_n$ converges uniformly to $\gamma$ on $I$, we have that for $n$ big enough $\gamma_n(I)\subset U$ and being  the chronological relation  open,  there is   also  a timelike geodesic (for the metric $g_{\eps_n}$)  between $\gamma_n(t_0-\tau)$ and $\gamma_n(t_0+\tau)$ contained in $U$, in contradiction with the fact that  $\gamma_n|_I$ is a lightlike pregeodesic in $U$ and, being $U$ a convex neighborhood of $g_{\eps_n}$,  it is then the only pregeodesic  in $U$  between $\gamma_n(t_0-\tau)$ and $\gamma_n(t_0+\tau)$.	\end{proof}

	From Proposition~\ref{preconv} then a  result about convergence of geodesics of the approximating Randers metrics $F_\eps$ to a geodesic of $F$ follows. 
	\begin{proposition}\label{energies}
		Assume that condition \eqref{star}  holds;  let    $x_0\neq x_1\in S$ and, 
		for each $\eps\in(0,1]$,    let $\sigma_\eps:[0,1]\to S$  be a geodesic of the Randers metric $F_\eps$  such that $\sigma_\eps(0)=x_0$ and $\sigma_{  \varepsilon }(1)=x_1$  and let us assume that   
		%and let us assume  that 	$\Delta:= \sup_{\eps\in (0,1)}\Delta_\eps \in\R$.
		$\sup_{\eps\in (0,1]} E_\eps(\sigma_\eps)=E<+\infty$.  
		
		Then there exists a sequence $\eps_n\to 0$ and a geodesic $\sigma$ of $(S,F)$ from  $x_0$ to $x_1$ such that   $\sigma_{\eps_n}$ uniformly converges to $\sigma$ and $\ell_{\eps_n}(\sigma_{\eps_n})\to  \ell_F(\sigma)$.  
		
		Moreover,   if $\Lambda(x_0)\neq 0$ or $\Lambda(x_0)=0$ but $\de\Lambda_{x_0}(\mathcal D_{x_0})\neq 0$,   then the point $x_1$ can be taken to be equal to $x_0$.
	\end{proposition}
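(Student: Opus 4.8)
The plan is to transport the problem, through the Fermat correspondence of Theorem~\ref{fermat}, into a convergence statement for future-pointing lightlike pregeodesics of the Lorentzian metrics $g_\eps$, and then to apply Proposition~\ref{preconv}. First I would reparametrize each $\sigma_\eps$ by its $F_\eps$-arclength. Since $\sigma_\eps$ is an $F_\eps$-geodesic it has constant speed $F_\eps(\dot\sigma_\eps)\equiv\ell_\eps(\sigma_\eps)=:\Delta_\eps$ and $E_\eps(\sigma_\eps)=\tfrac12\Delta_\eps^2$, so the hypothesis $\sup_\eps E_\eps(\sigma_\eps)=E$ yields $\Delta_\eps\le\sqrt{2E}$. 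The reparametrized curve $\tilde\sigma_\eps\colon[0,\Delta_\eps]\to S$ is a unit-speed geodesic, $F_\eps(\dot{\tilde\sigma}_\eps)\equiv1$, and by Theorem~\ref{fermat} its lift $\gamma_\eps(t)=(\tilde\sigma_\eps(t),t)$ is a future-pointing lightlike pregeodesic of $(S\times\R,g_\eps)$ from $(x_0,0)$ to $(x_1,\Delta_\eps)$. When $x_0\neq x_1$ I would also record the lower bound $\Delta_\eps=\ell_\eps(\sigma_\eps)\ge\ell_{\bar\eps}(\sigma_\eps)\ge d_{\bar\eps}(x_0,x_1)>0$, valid for $0<\eps\le\bar\eps$, which uses the monotonicity $F_\eps\ge F_{\bar\eps}$ of Proposition~\ref{monotonicity}.

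Since the $\Delta_\eps$ are bounded, along a suitable sequence $\eps_n\to0$ I may assume $\Delta_{\eps_n}\to\Delta$, with $\Delta\ge d_{\bar\eps}(x_0,x_1)>0$ in the case $x_0\neq x_1$. Proposition~\ref{preconv}, applied with $t_0=0$, then produces a further subsequence along which $\gamma_{\eps_n}$ converges uniformly to a future-pointing lightlike pregeodesic $\gamma(t)=(\sigma(t),t)$ of $g$ joining $(x_0,0)$ to $(x_1,\Delta)$. Because the passage from $[0,1]$ to $[0,\Delta_{\eps_n}]$ is an affine rescaling and $\Delta_{\eps_n}\to\Delta$, the uniform convergence of the $\gamma_{\eps_n}$ carries over, after rescaling back to $[0,1]$, to uniform convergence of $\sigma_{\eps_n}$ to $\sigma$.

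It remains to recognize $\sigma$ as a geodesic of $(S,F)$ and to obtain the length convergence; the delicate point is admissibility, since the converse implication of Theorem~\ref{fermat} requires $\dot\sigma\in\mathcal A$. I would extract this from the conservation law for $\partial_t$: along the lightlike geodesic $\gamma$ the quantity $C_\gamma=\omega(\dot\sigma)-\Lambda\dot t$ is constant and $C_\gamma^2=h(\dot\sigma,\dot\sigma)$, with $h=\Lambda g_0+\omega\otimes\omega$. If $C_\gamma=0$, then $h(\dot\sigma,\dot\sigma)\equiv0$ forces $\dot\sigma\equiv0$ (using $\Lambda\ge0$, the positive definiteness of $h$ where $\Lambda>0$, and the lightlike equation \eqref{ellipsoid} where $\Lambda=0$), so $\sigma$ would be constant, which is impossible when $x_0\neq x_1$. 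Hence $C_\gamma\neq0$, whence $h(\dot\sigma,\dot\sigma)=C_\gamma^2>0$; this gives $\dot\sigma\neq0$ where $\Lambda>0$, and $\omega(\dot\sigma)\neq0$ where $\Lambda=0$, which together with \eqref{ellipsoid} forces $\omega(\dot\sigma)<0$ there. Thus $\dot\sigma\in\mathcal A$ throughout and $\sigma$ is non-constant, so Theorem~\ref{fermat} identifies $\sigma$ as a geodesic of $(S,F)$ with $F(\dot\sigma)\equiv1$. Finally, as $\tilde\sigma_{\eps_n}$ and $\sigma$ both have unit speed, $\ell_{\eps_n}(\sigma_{\eps_n})=\Delta_{\eps_n}\to\Delta=\ell_F(\sigma)$.

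For the closing assertion ($x_1=x_0$) the same construction applies to non-constant loop geodesics $\sigma_\eps$, but the conservation-law argument no longer excludes $C_\gamma=0$, i.e.\ the degenerate limit with $\sigma\equiv x_0$ and $\gamma(t)=(x_0,t)$ the vertical lightlike line. By Remark~\ref{czero} this vertical line is a pregeodesic exactly when $\Lambda(x_0)=0$ and $\de\Lambda_{x_0}(\mathcal D_{x_0})=0$; the stated hypothesis is precisely the negation of this, so the vertical limit is ruled out and $\sigma$ must be non-constant, hence (again via $C_\gamma\neq0$) an admissible geodesic loop. I expect the main obstacle to lie here: beyond excluding the vertical limit, one must ensure that the arrival times do not collapse, i.e.\ $\Delta>0$, so that the limit loop is genuinely non-trivial. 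This should follow, when $\Lambda(x_0)\neq0$, from a uniform-in-$\eps$ lower bound on the length of non-constant $F_\eps$-geodesic loops based at $x_0$ (a normal-neighborhood/injectivity-radius estimate that is stable as $\eps\to0$ because near $x_0$ the metrics $g_\eps$ depend smoothly on $\eps$, cf.\ Remark~\ref{smoothdependence}).
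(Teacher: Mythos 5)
Your proposal follows essentially the same route as the paper's proof: reparametrize each $\sigma_\eps$ to unit $F_\eps$-speed, lift it via Theorem~\ref{fermat} to a future-pointing lightlike pregeodesic of $g_\eps$, pass to the limit with Proposition~\ref{preconv}, and come back down with Theorem~\ref{fermat}, using Remark~\ref{czero} to exclude a constant limit in the loop case. For $x_0\neq x_1$ your argument is complete and in fact more detailed than the paper's: where the paper identifies the limit as an $F$-geodesic by directly invoking Theorem~\ref{fermat} (i.e.\ \cite[Theorem 5.5]{CaJaSa14}), you verify admissibility of the limit from the Killing conservation law ($C_\gamma=0$ forces $\dot\sigma\equiv 0$, hence $C_\gamma\neq 0$, hence $\dot\sigma\in\mathcal A$). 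That check is correct up to a cosmetic point: $g(\dot\gamma,\partial_t)$ is constant along the \emph{affinely parametrized} geodesic, not along the $t$-parametrization, but since the vanishing and the sign of $g(\dot\gamma,\partial_t)$ and of $h(\dot\sigma,\dot\sigma)$ are reparametrization-invariant, nothing is lost. Your explicit bound $\Delta\geq d_{\bar\eps}(x_0,x_1)>0$ also makes precise a nondegeneracy that the paper leaves implicit.

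In the loop case your diagnosis of the obstacle is right, and it is a point the paper's one-line argument passes over: Remark~\ref{czero} can only be brought to bear if the limit interval $[0,\Delta]$ is nondegenerate, and a collapsing limit ($\Delta=0$) would actually falsify the conclusion exactly when $\Lambda(x_0)=0$ and $\de\Lambda_{x_0}(\mathcal D_{x_0})\neq 0$, since the constant curve is then not admissible. However, your proposed remedy (a uniform-in-$\eps$ injectivity-radius bound for $F_\eps$, argued only when $\Lambda(x_0)\neq 0$) covers precisely the case where a collapsing limit is harmless, and misses the Kropina-type case where it matters; moreover the metrics $F_\eps$ degenerate at $x_0$ when $\Lambda(x_0)=0$, so a Finslerian injectivity-radius estimate is the wrong tool there. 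The gap closes in all cases with the paper's own Lemma~\ref{convexforall}: suppose the $\sigma_{\eps_n}$ are non-constant and $\Delta_n\to 0$. By Proposition~\ref{monotonicity}, $d_{\bar\eps}\big(x_0,\tilde\sigma_{\eps_n}(t)\big)\leq \Delta_n$ (the estimate from the proof of Proposition~\ref{preconv}), so for $n$ large the curves $\gamma_{\eps_n}$ lie in a neighborhood $U$ of $(x_0,0)$ that is convex for all $g_\eps$, $\eps\in[0,\eps_0]$. The affinely reparametrized $\gamma_{\eps_n}$ is then \emph{the} unique $g_{\eps_n}$-geodesic in $U$ joining $(x_0,0)$ to $(x_0,\Delta_n)$, and it is lightlike. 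But the vertical segment $t\mapsto (x_0,t)$, $t\in[0,\Delta_n]$, also lies in $U$ and is $g_{\eps_n}$-timelike, because $g_{\eps_n}(\partial_t,\partial_t)=-(\Lambda(x_0)+\eps_n)<0$ for \emph{every} value of $\Lambda(x_0)$; hence $(x_0,\Delta_n)$ is in the chronological future of $(x_0,0)$ computed within $(U,g_{\eps_n})$, and the unique connecting geodesic in $U$ must be timelike (see \cite[Ch.~14, Lemma~2]{O'neill}), a contradiction. Therefore $\Delta>0$, and with this supplement both your argument and the paper's go through.
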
	
	\begin{proof}
		    As any Fermat  metric $F_\eps$ is also a Randers-Kropina metric, we know from Theorem~\ref{fermat} that $\gamma_\eps:[0, \sqrt{2E_\eps(\sigma_\eps)} ]\to S\times \R$,   defined as \[\gamma_\varepsilon(t)=\big(\sigma_\varepsilon( t /\sqrt{2E_\eps(\sigma_\eps)}) ,t\big),\]   
		     is a   future-pointing lightlike pregeodesic of $(S\times\R,g_\eps)$.  From    Proposition~\ref{preconv} we then get  the existence of a sequence $\{\eps_n\}$ such that $\gamma_{\eps_n}$ uniformly converges to future-pointing lightlike pregeodesic $\gamma=(  \tilde\sigma (t),t)$ of $g$ between $(x_0, 0)$ and $(x_1, \sqrt{2\mathcal E})$,   where $\mathcal E\in [0,E]$ is such that $\lim_n E_{\eps_n}(\sigma_{\eps_n})=\mathcal E$.    Taking into account that being $x_0\neq x_1$,  $\tilde\sigma$ cannot be constant,   from Theorem~\ref{fermat} we deduce that $\tilde\sigma$ is a unit  geodesic of $F$  and $\sqrt{2\mathcal E}=\ell_F(\tilde\sigma)$.     The reparametrization $\sigma(t)=\tilde\sigma( t \sqrt{2 \mathcal E })$ is the required solution.    For the last part, if $x_0=x_1$, again $\tilde\sigma$ cannot be constant  by Remark~\ref{czero} and then Theorem~\ref{fermat} can be applied also in this case. 
	\end{proof}
	\section{The \bb control system \eb and admissible curves}
	We introduce in this section \bb a control system \eb whose solutions are    $W^{1,p}$-admissible curves $\sigma :[0,1]\to S$.   
	
	 Let $p\in \R$, $p\geq 1$ and let  $W_{x_0x_1}^{1,p}(S)$ be the Sobolev manifold of $W^{1,p}$ paths between two points $x_0$ and $x_1$ in $S$ defined as
	\bmln W^{1,p}_{x_0x_1}(S):=\{\sigma:[0,1]\to S: \text{ $\sigma$ absolutely continuous,}\\ \sigma(0)=x_0,\ \sigma(1)=x_1,\ \inte \big(g_0(\dot\sigma,\dot\sigma)\big)^{p/2}\de s<+\infty\},\emln  
	 (of course, if $p=1$ the integrability of $\sqrt{g_0(\dot\sigma,\dot\sigma)}$ is equivalent to the absolute continuity of $\sigma$).  
	Let us assume that $\omega_x\neq 0$ for all $x\in S$ and let $\mathcal D\subset TS$ be the distribution defined by its kernel, i.e. for each $x\in S$, $\mathcal D_x:=\ker(\omega_x)$.
	Let $X_1,\ldots X_d$, $d\geq \mathrm{rank}\mathcal D$ be a set of globally defined smooth vector fields on $S$ which generate $\mathcal D$ {\color{black} and are compatible with the the sub-Riemannian norm on $\D$ defined by $g_0|_{\D\times \D}$, in the sense that, for all $v\in D$, it holds
		\beq\label{subRiem} 
		g_0(v,v)=\min\{\sum_{i=1}^{d} u_i^2\,\, \big|\,  v=\sum_{i=1}^{d}u_i (X_i)_x\},  \eeq
		where $x=\pi(v)$ (see \cite[Corollary 3.27]{AgBaBo20}).
		
		 Following \cite{CaMaSu24}, we consider the system
	\beq\label{affinecontrol}
		\dot\sigma=u_0X_{0\sigma}+\sum_{i=1}^du_i X_{i\sigma},\quad \sigma(0)=x_0\in S	\eeq
	where 	$X_0$ is the smooth $g_0$-unit vector field on $S$ which is $g_0$-orthogonal to  $\mathcal D$, such that   $-\omega(X_0)=\|\omega\|$, and by $X_{j\sigma}$, $j\in\{0,\ldots, d\}$ we denote the vector field $X_j$ along $\sigma$.
	
%	Notice that an absolutely continuous curve $\sigma\colon I\to S$, with $\dot\sigma(s)\neq 0$ a.e. on $I$,  solves \eqref{diffincl} if and only if it solves \eqref{affinecontrol} for some control $u:I\to \R^{d+1}$, $u=(u_0, u_1, \ldots, u_d)$ which is in $L^1(I, \R^{d+1})$ with $u_0>0$ a.e. on $I$ (see \cite{CaMaSu24} for more details). 
		Notice that for a solution $\sigma$ of \eqref{affinecontrol} with $u_0>0$, we have $\omega(\dot\sigma(s))=u_0\omega(X_0)=-u_0\|\omega\|<0$, hence  $\dot\sigma(s)\in\mathcal A$ for  a.e.   $s\in I$. 

Actually the  controls  $u:I\to \R^{d+1}$, $u=(u_0, u_1, \ldots, u_d)$ that we are going to consider satisfy some constraints: the first components  $u_0$ are non-negative piecewise constant functions,  while the other ones constitute a  specific set of $L^\infty$ vector-valued functions. 
In order to define this last set,  we need that  $S$ admits a covering that satisfies some conditions below (see \cite[Remarks 2.1 and 2.2]{CaMaSu24}).
\bere\label{localsections}
For each  $z\in M$
we  consider  a neighbourhood  $U_z$ of $z$ and 
a frame field $\{Y_i\}_{i\in \{1,\ldots, m-1\}}$ for the distribution $\D$   such that 
$g_0(Y_i, Y_i)=1$ on  $U_{z}$, for all $i\in\{1,\ldots m-1\}$. 
Being $\D$ nowhere integrable i.e. $(\omega\wedge\de \omega)_p\neq 0$, for all $p\in S$, up to 
restricting $U_z$, there exist $Y_j, Y_l$ in the basis, such that $[Y_j, Y_l]$ is transversal to $\D$ at each point in  $U_z$ and moreover   
\beq\label{lambday}
\lambda_z:= \inf\big(-\omega([Y_j,Y_l])\big)=\inf d\omega(Y_j,Y_l)>0.\eeq
By rearranging the vector fields in the local frame, we can assume that the vector field $Y_j, Y_l$ are the first two $Y_1, Y_2$.
\ere
\smallskip
We make the following assumptions:
\begin{enumerate}[(i)]
	\item\label{ass1}
	\[\Omega:=\sup\|\omega\|<+\infty;\]
	\item\label{ass2}  there exists  a  countable  covering    $U_{z_k}$, $k\in \mathbb N$,  of  $M$ made  by neighbourhoods $U_{z_h}$ as in Remark~\ref{localsections} and 
	\beq\label{lambda}
	\lambda:=\inf_{k\in \N } \lambda_{z_k}>0.
	\eeq
\end{enumerate}
\bere
We notice that assumptions~\eqref{ass1}--\eqref{ass2} are satisfied if $S$ is compact.
\ere 
Let 
%\[k_1:=\sup_{k\in \N}\max_{j\in \{1,\ldots, m-1\}}\max_{x\in  U_{z_k}}|(Y_j)_x|_0,\]
us   multiply   the vectors $Y_i$, $i\in\{1,\ldots,m-1\}$,  by the same factor
\[C:=\big(5(m+3)\Omega/\lambda\big)^{1/2}.
\]
%	\xout{where, we recall, $m=\mathrm{dim}(M)$.}
Let us denote these rescaled vector fields still with $Y_i$. Hence,  for the rescaled vector fields $Y_i$ the number defined in \eqref{lambda}, still denoted with $\lambda$, satisfies: 
\[\lambda >4(m+3)\Omega,\]
moreover
\beq g_0(Y_i, Y_i)= C^2,  \text{ on } U_{z_k},\ k\in \N, \label{k1k2}\eeq
for all $i\in\{1,\ldots m-1\}$.

\medskip
\noindent {\bf Definition of the admissible controls \A. }
Let $D$ be the set of partitions of the interval $I$.     
We call a measurable control $u\colon I\to \R^{d+1}$ {\it admissible} if there exists $P\in D$ such that for each  $J\in P$,  there exist  a constant  $\xi_J\ge 0$, and  measurable functions  $\alpha_{Ji}\colon J\to \R$, $i\in\{1,\ldots,d\}$,
with 
\bal &\sup_{s\in J} \sum_{i= 1}^d\alpha_{Ji}^2(s)\leq C^2,\label{Csquare}\\ 
\intertext{where $C$ satisfies \eqref{k1k2}, such that}
&u_0|_J=\xi_J^2,\label{u0}\\
&u_i|_J=\xi_J\alpha_{Ji}, \ \text{ for all $i\in \{1,\ldots, d\}$.}\label{u_i}
\eal
We define \A	to be the set of all admissible controls. 

Notice that  if $u=(u_0, u_1, \ldots, u_d)\in \A$, then from \eqref{u0},  $u_0$ is nonnegative and piecewise constant, and  from \eqref{u_i}, each $u_i$, $i\in\{1,\ldots, d\}$, is 
bounded, hence $\A\subset L^{\infty}(I, \R^{d+1})$. Moreover, $u|_J=0$ if and only if $\xi_J=0$.

\smallskip
Let us denote by  $\Omega^{1,p}\subset $  the set of solutions of \eqref{affinecontrol} corresponding to controls $u=(u_0,\ldots , u_d)\in \A$.
}

Let $\mathcal F: \Omega^{1,p} \to S$ be the {\em endpoint map}, $\mathcal F(\sigma)=\sigma(1)$.
	Let us denote by $ \Omega^{1,p} (x_1)$ the set $\mathcal F^{-1}(x_1)$, i.e. the set of $ W^{1,p} $-solutions of \eqref{affinecontrol} connecting $x_0$ to $x_1$ \bb and stemming from controls   $u\in \A$. \eb
	
	\bb By  Theorem 3.2  in \cite{CaMaSu24} we deduce the following: \eb
	\bt\label{lerario}
	Assume that $\mathcal D$ is \bb  nowhere integrable (i.e $(\omega\wedge\de \omega)_p\neq 0$, for all $p\in S$)  and satisfies assumptions \eqref{ass1}--\eqref{ass2}. \eb  Then the set $ \Omega^{1,p}(x_1) $ is homotopically equivalent to $ W^{1,p}_{x_0x_1}(S) $, being the inclusion map 
	\[ j_p :\Omega^{1,p}(x_1)\to W^{1,p}_{x_0x_1}(S) \] 
	a homotopy equivalence.
	\et

	Let us recall that the {\em Lusternik-Schnirelman category} of a subset $A$ of a topological space $B$, denoted by $\mathrm{cat}_B(A)$ is the minimum number of closed contractible (in $B$) subsets of $B$ that cover $A$.  
	
	\bere\label{catK}\bb Let $f\colon W^{1,p}_{x_0x_1}(S) \to \Omega^{1,p}(x_1)$ be a homotopy inverse of $j$. Let $V$ be  a subset of $W^{1,p}_{x_0x_1}(S)$ such that   
	$\mathrm{cat}_{W^{1,p}_{x_0x_1}(S)}(V)\geq k$. Then  \[\mathrm{cat}_{W^{1,p}_{x_0x_1}(S)}\big(j(f(V))\big)\geq k,\] 
	otherwise  there exist $h<k$ closed contractible sets $U_i\subset  W^{1,p}_{x_0x_1}(S) $ that cover $j(f(V))$ and then $f^{-1}\big(j^{-1}(U_i)\big)$ cover $V$ and are contractible because the identity map of 
	$W^{1,p}_{x_0x_1}(S)$ is homotopic to $j\circ f$. \eb 
	\ere
	Let us show that the energy functional $E$ of the Randers-Kropina metric is  bounded on compact subsets which are  the images through \bb $j\circ f$ of compact subsets in $W^{1,2}_{x_0x_1}(S)$.  \eb 
	\bpr\label{bounded}
	 \bb Let $K\subset W^{1,2}_{x_0x_1}(S)$ be a compact subset and let us consider the inclusion $j:\Omega^{1,2}(x_1)\to W^{1,2}_{x_0x_1}(S)$ and a homotopy inverse $f$ of $j$.  Then 
	 \[\sup_{\sigma\in j(f(K))}E(\sigma)<+\infty.\]	\eb \epr
	\begin{proof}
	{\color{black}	
		Since $j(f(K))$ is compact in $W^{1,2}_{x_0x_1}(S)$,  and this space embeds continuously in $C^0(I, S)$, we deduce that the supports of the curves  $\sigma\in j(f(K))$  are in a compact subset $K_1$ of $S$. As $\omega_x\neq 0$, for all $x\in S$,  there  exists then a positive constant $\delta=\min_{x\in K_1} \|\omega_x\|$. As the curves $\sigma\in j(f(K))$ solve \eqref{affinecontrol} we have       
		\[g_0(\dot\sigma,\dot\sigma)=u_0^2g_0(X_0, X_0)+g_0(\dot\sigma_\D, \dot\sigma_\D)=u_0^2+g_0(\dot\sigma_\D, \dot\sigma_\D),\]
		where $\dot\sigma_\D$ is, for a.e. $s\in[0,1]$, the orthogonal projection of $\dot\sigma(s)$ on \D. Since the vector fields $\{X_i\}_{1\in \{1,\ldots ,d\}}$ generate the distribution $\D$ and \eqref{subRiem} holds, we have 
		\[g_0(\dot\sigma_\D, \dot\sigma_\D)\leq \sum_{i=1}^du^2_i.\]
		Hence, recalling that $-\omega(X_0)=\|\omega\|$ we get,  for $\dot\sigma\neq 0$,
		\bml\label{ineq1}\frac{g_0(\dot\sigma,\dot\sigma)^2}{\omega(\dot\sigma)^2}= \frac{g_0(\dot\sigma,\dot\sigma)^2}{\|\omega_\sigma\|^2u^2_0}\leq \frac{(u_0^2+\sum_{i=1}^du^2_i)^2}{\|\omega_\sigma\|^2u^2_0}\\\leq \frac{2u_0^4+2(\sum_{i=1}^du^2_i)^2}{\delta^2 u_0^2}
		=\frac{2}{\delta^2}\left(u_0^2+\frac{(\sum_{i=1}^du^2_i)^2}{u_0^2}\right).\eml
		Let then for each $\sigma\in j(f(K))$, $P_\sigma$ be a subdivision of the interval $[0,1]$	 according to the definition of the admissible controls; let us denote by  $J_h$ the intervals associated with  $P_\sigma$ where $u_0\neq 0$.
		% and let $P^*_\sigma\subset P_\sigma$ the corresponding set of indexes $h$.  
		Since on the possible existing intervals where $u_0=0$, we have that $u=0$ (and $\dot\sigma=0$ there),  and  we agree  to set the value of the Kropina metric $-g(v,v)/\omega(v)$ equal to $1$ at the $0$-section,  from \eqref{ineq1} we get:
		\beq\label{Esigma}E(\sigma)\leq  \inte \frac{g_0(\dot\sigma,\dot\sigma)^2}{\omega(\dot\sigma)^2} \de s\leq 1+\frac{2}{\delta^2}\left(\inte u_0^2\de s+\sum_{h}\int_{J_h} \frac{(\sum_{i=1}^du^2_i)^2}{u_0^2} \de s\right).\eeq
		The first integral term  in the right-hand side of  \eqref{Esigma}  is bounded on $j(f(K))$ since this is a compact set in the $W^{1,2}$-topology and $u_0^2=g_0(\dot\sigma, X_0)^2$. Let us show that  the sum of the integrals appearing in \eqref{Esigma} is bounded on $j(f(K))$ as well.
		Recalling again the definition of the admissible controls and in particular \eqref{Csquare},  we have
		\bmln \sum_{h}\int_{J_h} \frac{(\sum_{i=1}^du^2_i)^2}{u_0^2} \de s=\sum_{h}\int_{J_h} \frac{(\sum_{i=1}^d\alpha_{J_h i}^2\xi_{J_h}^2)^2}{\xi_{J_h}^4} d t\\=\sum_{h}\int_{J_h} (\sum_{i=1}^d\alpha_{J_h i}^2)^2 d t\leq \sum_h C^4|J_h|\leq C^4.\emln}
						\end{proof}
	
	\section{Multiple geodesics between  two points}\label{multiplesec}
	In this section we prove our main result about the existence of infinitely many  geodesics between two given points on a Randers-Kropina space $(S,F)$. As a consequence, we obtain multiplicity   results for   future-pointing lightlike geodesics in the spacetime associated with $(S,F)$ and for generalized solutions of the associated Zermelo's navigation problem. 
	
	The multiplicity result is obtained by applying Lusternik-Schnirelman theory to each energy functional of the approximating Randers metrics $F_\eps$. The idea to obtain multiple critical points of a functional from branches of families of critical points of approximating functionals is not new; it has been used in the so-called {\em penalization method} employed in the search of critical points of a functional defined on a non-complete   manifold (see e.g. \cite[Theorem 4.4.9]{Masiel94} for an example of application of this method to the geodesic connectedness problem of  an open 	 stationary Lorentzian manifold with boundary invariant by the flow of the timelike Killing vector field, or \cite{BaCaGS11} for an  application to the convexity of an open region with convex boundary in a Finsler manifold).
			
	\bt\label{multiplegeos}
	Let $(S, F)$ be a Randers-Kropina space and $x_0, \ x_1$ be  any  two points on $S$.   Let us assume that
	\begin{itemize}
		\item[(a)]  $S$ is   non-contractible;
		\item[(b)] condition \eqref{star} holds;
		\item[(c)]  \bb the distribution $\mathcal D:=\ker\omega$  is   nowhere integrable (i.e. $(\omega\wedge\de \omega)_p\neq 0$, for all $p\in S$) and  assumptions~\eqref{ass1}--\eqref{ass2} hold;\eb
		\item[(d)] whenever $x_0=x_1$, either $\Lambda(x_0)\neq 0$ or $\Lambda (x_0)=0$ and   $\de\Lambda_{x_0}(\mathcal D_{x_0})\neq 0$.
		\end{itemize} 
	 
	Then there exist infinitely many  geodesics $\sigma_h$ of $F$,  $h\in \N$,   connecting $x_0$ to $x_1$ and such that their $F$-lengths   diverge.
	\et	
	\begin{proof}
		By a result of E. Fadell and S. Husseini, \cite[Proposition 3.2]{FadHus91}, we know  from (a)  that $\cat\big(W^{1,2}_{x_0x_1}(S)\big)=+\infty$ and there exist compact subsets in $W^{1,2}_{x_0x_1}(S)$ with arbitrarily  large category. 
		%Hence, Theorem~\ref{lerario}  with $p=4$  and  Remark~\ref{catK} with $p_1=2$ and $p_2=4$,  ensure the same properties for $\Omega^{1,2}(x_1)$.  
		Let then,  for each $k\in\N\setminus\{0\}$, $\Gamma(k):=\{A\subset W^{1,2}_{x_0x_1}(S):\cat_{W^{1,2}_{x_0x_1}(S)}(A)\geq k\}$  and let us denote by  $K_{k}$ a compact set in $\Gamma(k)$.  Since, for each $\eps>0$, the energy functional $E_\eps$ is a $C^{1,1}$ functional on $W^{1,2}_{x_0x_1}(S)$, bounded from below and,  thanks to assumption (b),  satisfying the Palais-Smale condition (recall Remark~\ref{aboutstar}) we know that for each $\eps\in(0,\bar\eps]$ and each $k$
		\[c_{\eps,k}:=\inf_{A\in \Gamma(k)}\sup_{\sigma\in A} E_\eps(\sigma)\] 
		is a critical value of $E_\eps$ (see e.g. \cite[Theorem 2.6.5]{Masiel94}). 
		 Then, for each $k$,   we have that $c_{\eps, k }\leq \max_{j(f(K_{ k }))} E_{\eps}$ for all $\eps\in (0,\bar 
		\eps]$ (recall that by Remark~\ref{catK}, $\cat_{W^{1,2}_{x_0x_1}(S)}j(f(K_{ k }))\geq  k $). 	Since  from Proposition~\ref{monotonicity} $E$ controls from above every $E_\eps$ (when restricted to admissible curves)  and  $E$ is bounded on  $j(f(K_k))$  by Proposition~\ref{bounded}, we have that, for each  $k$,  the family of critical values $c_{\eps, k  }$ is bounded  from above by $\sup_{j(f(K_{ k }))} E$. 
		On the other hand, for each $c>0$ the sublevels $E^c_\eps:=\{\sigma\in W^{1,2}_{x_0x_1}(S):E_\eps(\sigma)\leq c\}$ have finite category (see \cite[Proposition 2.6.10]{Masiel94}) and then,  for a fixed $\alpha\in\R$, there exists $k(\alpha)\in\N$ such that $A\cap \big(W^{1,2}_{x_0x_1}(S)\setminus E_\eps^\alpha\big)\neq \emptyset$ for all $A\in\Gamma(k(\alpha))$.    Hence, we can take $\alpha_1>\sup_{j(f(K_{ 1 }))} E$ and then $ k_2 >1 $ such that for all $A\in \Gamma(k_{ 2 })$, it holds $A\cap \big(W^{1,2}_{x_0x_1}(S)\setminus E_\eps^{\alpha_1}\big)\neq \emptyset$. This implies 
		that $\alpha_1\leq c_{\eps,k_{ 2 }}\leq \sup_{j(f(K_{k_{ 2 }}))} E$. Thus, by iteration, we can construct two strictly increasing sequences $\{\alpha_h\}$, $\alpha_h\to+\infty$, and $\{ k_h \}$, with $ k_1=1 $, such that  
		\beq\label{pinch}\sup_{j(f(K_{k_{ h }}))}E<\alpha_h\leq c_{\eps,k_{ h+1 }}\leq \sup_{j(f(K_{k_{ h+1 }}))} E.\eeq
		 Thus, for each $h$, we get a family of curves  $\sigma_{\eps,h}$ between $x_0$ and $x_1$  such that, for any $\eps\in (0,\bar\eps]$, $\sigma_{\eps,h}$ is a geodesic  of the Fermat metric $F_\eps$ and from \eqref{pinch} these geodesics have energies bounded from above. Hence,  Proposition~\ref{energies} gives   a  geodesic   $\sigma_h$ of the metric $F$  between $x_0$ and $x_1$, which by \eqref{pinch} satisfies that   $\alpha_{h-1}\leq E(\sigma_h)$, i.e.    $\sqrt{2\alpha_{h-1}}\leq \ell_{F}(\sigma_h)$,  and since $\alpha_{h}\to +\infty$ the proof is complete.
	\end{proof}
	Since a Kropina metric corresponds to the case of a Randers-Kropina one where $\Lambda =0$ (and $\omega_x$ does not vanish at any point $x\in S$),  Theorem~\ref{multiplegeos} applies to a Kropina space provided that $x_0\neq x_1$. 
	\begin{corollary}
		Let $(S, F)$ be a Kropina space and $x_0, \ x_1$ be two points on $S$ such that $x_0\neq x_1$.  Assume that condition \eqref{star} hold, $S$ is  non-contractible, \bb the distribution $\mathcal D:=\ker\omega$  is  nowhere integrable and assumptions~\eqref{ass1}--\eqref{ass2} hold. \eb
		Then there exist infinitely many  geodesics $\sigma_h$ of $F$  connecting $x_0$ to $x_1$ and such that their $F$-lengths   diverge.
	\end{corollary}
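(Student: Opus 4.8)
The plan is to read the Kropina case as the special instance $\Lambda\equiv 0$ of the Randers--Kropina setting of Section~\ref{causalKilling} and to obtain the statement as a direct application of Theorem~\ref{multiplegeos}. First I would note that a Kropina norm is precisely the function $F$ defined in \eqref{randerskropina} when $\Lambda\equiv 0$ and $\omega_x\neq 0$ for every $x\in S$; the non-vanishing of $\omega$ is exactly what makes $\mathcal D=\ker\omega$ a smooth distribution of constant corank one, and it is already implicit in the hypothesis that $\mathcal D$ be nowhere integrable. Hence $(S,F)$ is genuinely a Randers--Kropina space, and it remains only to check the four hypotheses (a)--(d) of Theorem~\ref{multiplegeos}.

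Conditions (a), (b) and (c) of that theorem---non-contractibility of $S$, condition \eqref{star}, and nowhere integrability of $\mathcal D$ together with assumptions \eqref{ass1}--\eqref{ass2}---are assumed verbatim in the statement of the Corollary, so nothing has to be verified there. The only point requiring care is condition (d), and this is exactly where the standing hypothesis $x_0\neq x_1$ enters. In the Kropina case $\Lambda\equiv 0$ identically, so neither alternative of (d) can hold: the alternative $\Lambda(x_0)\neq 0$ fails outright, and since $\Lambda$ is constant one has $\de\Lambda_{x_0}\equiv 0$, whence $\de\Lambda_{x_0}(\mathcal D_{x_0})=0$ as well. However, condition (d) is imposed only in the loop case $x_0=x_1$---it is the hypothesis that rules out the degenerate lift described in Remark~\ref{czero}, in which a constant curve could be mistaken for a genuine non-constant geodesic. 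Since we assume $x_0\neq x_1$, the clause ``whenever $x_0=x_1$'' is never invoked and (d) holds vacuously.

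With all four hypotheses in force I would then simply apply Theorem~\ref{multiplegeos}, which yields infinitely many geodesics $\sigma_h$ of $F$ joining $x_0$ to $x_1$ whose $F$-lengths diverge, as claimed. There is no real obstacle here beyond correctly diagnosing the status of condition (d): the entire analytic content---the approximation by the Randers metrics $F_\eps$, the uniform bound on the minimax levels via Proposition~\ref{bounded}, and the convergence supplied by Proposition~\ref{energies}---is already packaged inside Theorem~\ref{multiplegeos}. The one conceptual point worth flagging is that the restriction $x_0\neq x_1$ is not a mere technicality but an essential feature of the method: for a Kropina metric this approximation scheme cannot reach closed geodesics, precisely because (d) is forced to fail when $\Lambda\equiv 0$ and $x_0=x_1$.
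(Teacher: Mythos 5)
Your proof is correct and follows exactly the paper's own argument: the paper likewise treats the Kropina metric as the Randers--Kropina case $\Lambda\equiv 0$ and notes that Theorem~\ref{multiplegeos} applies ``provided that $x_0\neq x_1$,'' which is precisely your observation that condition (d) is vacuous when $x_0\neq x_1$ and unavoidably fails when $x_0=x_1$ since $\Lambda\equiv 0$ forces $\de\Lambda\equiv 0$. Your closing remark about why closed geodesics are out of reach of this method is a correct and slightly more explicit diagnosis than the paper's one-line justification.
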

	Thanks to Theorem~\ref{fermat}, the geodesics obtained in Theorem~\ref{multiplegeos} correspond to   infinitely many future-pointing lightlike geodesics $\gamma_h=(\sigma_h,t_h)$ between an event  $(x_0,t_0)\in S\times\R$ and a flow line of $\partial_t$, passing through an event $(x_1,t_0)$, in a spacetime $(S\times \R, g)$, $g$ as in \eqref{g},  having arrival times $t_h(1)\to +\infty$. 
	
	\bere 
	We point out that the infinitely many geodesics obtained in Theorem~\ref{multiplegeos} can be interpreted as generalized solutions of a Zermelo's navigation problem, in the sense that they are time-travel stationary paths  rather than time-travel minimizers. Indeed, as mentioned in the introduction, given a Riemannian manifold $(S,g_0)$ and a vector field $W$ on $S$, such that $g_0(W,W)\leq 1$, we can associate a Randers-Kropina space $(S,F)$ to the data $(g_0, W)$, by taking $\omega=-g_0(\cdot, W)$ and $\Lambda=1-g_0(W,W)$ (see  \cite[Propositions 2.55 and 2.57]{CaJaSa14}).
	From Theorem~\ref{fermat}, being $t_h(1)=t_0+\int_{\sigma_h} F$, the geodesics between two different points on $S$ obtained in Theorem~\ref{multiplegeos} are critical points   of the arrival time functional $\sigma\in\Omega_{x_0,x_1}(\mathcal A)\mapsto T(\sigma)=t_0+\int_\sigma F$ (cf. \cite[Theorem 7.8]{CaJaSa14} for a general  statement about stationary points of the arrival time). 
	Notice that,  from \bb an extension of the  Chow-Rashewskii theorem for affine control systems (see \cite[\S 4.5]{Jurdje97}), \eb the nonintegrability assumption for the distribution orthogonal to $W$  ensures the existence of an admissible smooth curve between any two points in $S$, which means that the whole manifold $S$ is ``navigable'' starting from any point.
	\ere
	\bere\label{minimo}
	As a final remark, we notice   that if  the manifold $S$ is contractible then  $\cat\big(W^{1,2}_{x_0x_1}(S)\big) = 1$. In this case, the existence of at least one geodesic between $x_0$ and $x_1$, $x_0\neq x_1$,  follows   from \cite[Theorem 4.9-(i)]{CaJaSa14} provided that condition \eqref{star} holds  and \bb the distribution $\mathcal D$  is nowhere  integrable.  In fact, under the latter  assumption, by  an extension of the  Chow-Rashewskii theorem for affine control systems (see \cite[\S 4.5]{Jurdje97})), \eb  there exists an admissible curve between $x_0$ and $x_1$; moreover, from Remark~\ref{aboutstar}, the spacetimes $(S\times\R, g_\eps)$ are globally hyperbolic for each $0\leq \eps<\bar\eps$, and  then $(S\times\R, g)$  is also globally hyperbolic and \cite[Theorem 4.9-(i)]{CaJaSa14} applies.
	\ere

\end{document}